\newcommand{\R}{{\mathbb R}}
\newcommand{\C}{{\mathbb C}}
\newcommand{\Z}{{\mathbb Z}}
\newcommand{\nn}{\nonumber}
\newtheorem{thm}{Theorem}[section]
\newtheorem{prop}[thm]{Proposition}
\newtheorem{lem}[thm]{Lemma}
\newtheorem{rem}[thm]{Remark}
\newtheorem{cor}[thm]{Corollary}
\numberwithin{equation}{section}
\begin{document}
\begin{title}{Connection problem for the generalized hypergeometric function}
\end{title}
\author{ Y. Matsuhira and H. Nagoya}
\address{YM: School of Mathematics and Physics, Kanazawa University, Kanazawa, Ishikawa 920-1192, Japan}
\email{y.matsu0727@gmail.com}
\address{HN: School of Mathematics and Physics, Kanazawa University, Kanazawa, Ishikawa 920-1192, Japan}
\email{nagoya@se.kanazawa-u.ac.jp}

\begin{abstract} 
We solve connection problem  between fundamental  solutions at 
singular points $0$ and $1$ for the generalized hypergeometric function, 
using analytic continuation of the integral representation. 
All connection coefficients are products of the sine and the cosecant. 
\end{abstract}

\maketitle

\section{Introduction}

Let $n \in \Z$ ($n>0$), $\alpha_1, \dots, \alpha_{n+1}, \beta_1, \dots, \beta_n \in \C$. 
The generalized hypergeometric series is 
\begin{align*}
 _{n+1} F _n \left( \begin{matrix}
\alpha_1 ,  \dots ,  \alpha_{n+1} \\
\beta_1 ,  \dots ,   \beta_n \\
\end{matrix} ; z \right) =\sum_{k=0}^\infty \frac{(\alpha_1)_k \cdots (\alpha_{n+1})_k} {(\beta_1)_k \cdots (\beta_n)_k k!}z^k, 
\end{align*}
where $(a)_k = a(a+1) \cdots (a+k-1)$. The generalized hypergeometric series converges on $|z|<1$ 
and 
satisfies the Fuchsian differential equation with three singular points $0$, $1$, $\infty$:
\begin{equation*}
\left\{ \frac{d}{dz}  \prod_{k=1}^{n}\left(\frac{d}{dz} + \beta_k -1\right) 
 -z \prod_{k=1}^{n+1}\left(\frac{d}{dz}+\alpha_k\right) \right\}F=0. 
\end{equation*}
We call this differential equation 
the generalized hypergeometric equation and 
 a solution to the  equation a generalized hypergeometric 
function. 

The Riemann scheme, which is the table of the characteristic exponents, 
of the generalized hypergeometric equation 
 is 
\begin{align*}
\left\{
 \begin{array}{ccc}
      z=0 & z=1 &  z=\infty \\
      0 & 0 & \alpha_1 \\
      1-\beta_1 & 1 & \alpha_2 \\
      \vdots & \vdots  & \vdots \\
      1-\beta_{n-1} & n-1 &  \alpha_n \\
      1-\beta_n & \sum_{i=1}^{n}\beta_i-\sum_{i=1}^{n+1}\alpha_i & \alpha_{n+1}
    \end{array}
\right\}. 
\end{align*}
Assume $\alpha_i-\beta_j\notin\Z$ and $\beta_i-\beta_j\notin\Z$ ($i\neq j$).  
Then the generalized hypergeometric equation 
admit a fundamental system of solutions 
at $z=0$ given by 
\begin{align*}
f_i^{(0)}(z)=(-z)^{1-\beta_i} {}_{n+1} F _n \left( \begin{matrix}
\alpha_1-\beta_i+1 ,  \alpha_2-\beta_i+1, \dots ,  \alpha_{n+1}-\beta_i+1 \\
\beta_1-\beta_i+1 ,  \dots ,  \widehat{\beta_i-\beta_i+1} ,  \dots ,  \beta_{n+1}-\beta_i+1 \\
\end{matrix} ; z \right)\quad (i=1,\ldots,n+1), 
\end{align*}
where the symbol $\widehat{A}$ means omitting $A$ and $\beta_{n+1}=1$, 
and a  fundamental system of solutions 
at $z=\infty$ given by 
\begin{align*}
f_i^{(\infty)}(z)=(-z)^{-\alpha_i}  {}_{n+1} F _n \left( \begin{matrix}
\alpha_i-\beta_1+1 ,  \alpha_i-\beta_2+1, \dots ,  \alpha_i-\beta_{n+1}+1 \\
\alpha_i-\alpha_1+1 ,  \dots ,  \widehat{\alpha_i-\alpha_i+1} ,  \dots ,  \alpha_i-\alpha_{n+1}+1 \\
\end{matrix} ; \frac{1}{z} \right)\quad (i=1,\ldots,n+1). 
\end{align*}
Fundamental systems of solutions at $z=1$ consist of 
one non-holomorphic solution with the characteristic exponent 
$\sum_{i=1}^{n}\beta_i-\sum_{i=1}^{n+1}\alpha_i$ 
and $n$ holomorphic solutions. 

Connection problem between fundamental systems of solutions 
of the generalized hypergeometric equation 
has been solved by various authors by 
several methods \cite{Kawabata 1}, \cite{Kawabata 2}, 
\cite{Kawabata 3}
\cite{Mimachi intersection numbers for n+1Fn}, 
\cite{Norlund}, 
\cite{Okubo Takano Yoshida}, \cite{S}, 
\cite{Winkler}. 
The choice of the fundamental systems of solutions 
\begin{equation*}
X_0=\left\{ f_1^{(0)}(z),\ldots, f_{n+1}^{(0)}(z)\right\},\quad 
X_\infty=\left\{ f_1^{(\infty)}(z),\ldots, f_{n+1}^{(\infty)}(z)\right\}
\end{equation*}
at $z=0$ and $z=\infty$ is canonical. 
All connection coefficients associated with $X_0$ and $X_\infty$ are products 
of the Gamma function and the inverse of the Gamma function. 
We note that if we multiply $f_i^{(0)}(z)$ and $f_i^{(\infty)}(z)$ by suitable scalars, 
then connection coefficients become products 
of the sine and the cosecant. 

On the other hand, there is no canonical choice of 
 fundamental systems of solutions at $z=1$. 
A connection matrix depends on choice of the fundamental system of 
solutions and connection coefficients are not necessarily 
products 
of the Gamma function and the inverse of the Gamma function. 
If we take the fundamental system of solutions at $z=1$ by 
$(z-1)^i+O((z-1)^n)$ ($i=0,1,\ldots,n-1$), $(z-1)^a(1+O(z-1))$ 
with $a=\sum_{i=1}^{n}\beta_i-\sum_{i=1}^{n+1}\alpha_i$, 
then the connection coefficients with $X_0$ or $X_\infty$ involve values 
of the generalized hypergeometric series at $z=1$. 
Kawabata reported in \cite{Kawabata 3} that 
there is a fundamental system of solutions at $z=1$ such that 
non-diagonal elements of the connection matrix with $X_0$ or $X_\infty$ are 
products of the sine and the cosecant and diagonal elements of that 
are one minus products of the sine and the cosecant. 

To the author's knowledge, connection 
problem between fundamental  systems of solutions at 
singular points $0$ and $1$ for the generalized hypergeometric equation 
has not been solved {\it completely}. 
The aim of the present note is to do that.  
Namely, we give connection matrices whose elements 
are products of the sine and the cosecant. 
In order to obtain the connection formula expressing a fundamental system of solution 
at $z=1$ in terms of the integral representations corresponding to $X_0$, 
we calculate analytic continuation of the connection formula expressing 
the integral representations corresponding to $X_\infty$  
in terms of the integral representations corresponding to $X_0$ 
\cite{Mimachi intersection numbers for n+1Fn}. 
We deform the domains of integration loaded with the integrand of 
the generalized hypergeometric function.  
The technique dealing with  deformation for the loaded cycles associated with 
Selberg type integrals 
is elaborated in \cite{Mimachi  Iwahori-Hecke}. 

Unfortunately, by using 
 analytic continuation of the connection formula expressing 
the integral representations corresponding to $X_0$  
in terms of the integral representations corresponding to $X_\infty$ 
\cite{Mimachi intersection numbers for n+1Fn}, 
we have not found a way to obtain 
the connection formula expressing the integral representations corresponding to $X_0$  
 in terms of the fundamental system of solution at $z=1$. 
Instead, we present directly the inverse matrix of the connection matrix 
expressing a fundamental system of solution 
at $z=1$ in terms of the integral representations corresponding to $X_0$,
 and give a proof by the residue calculus. 

There had been two methods deriving connection formulas of 
rigid Fuchsian systems by hypergeometric integrals. One is to use 
the Cauchy's integral theorem for obtaining linear relations among loaded cycles 
\cite{Aomoto 1}, \cite{Aomoto 2}. 
Such method was used for obtaining the connection formulas of the Simpson's Even four 
\cite{Haraoka Mimachi}, ${}_3F_2$ \cite{Mimachi 3F2},  sixth order rigid Fuchsian systems derived from conformal field theory \cite{BHS}.  
However, in the case of a general $n$-multiple hypergeometric integrals, 
there are too many linear relations among loaded cycles 
to obtain connection formulas between fundamental systems of solutions. 
Another method is to compute intersection numbers of loaded cycles 
\cite{Mimachi intersection numbers for n+1Fn}, which avoids to solve 
too many linear relations among loaded cycles. 
We believe that to compute analytic continuation of known connection formulas 
is also useful for obtaining another connection formulas. Especially because 
the connection coefficients for multiplicity-free case, 
such as the connection coefficients for elements of $X_0$ and $X_\infty$, 
is solved in \cite{Oshima}. 

The plan of the paper is as follows. 
In Section 2, we prepare notations and recall the 
connection matrix of the generalized hypergeometric  function between $X_0$ and $X_\infty$ 
in \cite{Mimachi intersection numbers for n+1Fn}. 
In Section 3,  we introduce domains of integration for a fundamental system of solutions 
at $z=1$, and prove the connection formulas between 
the fundamental system of solutions at $z=1$ and $z=0$. 
At the end, we remark on periodicity of the connection matrices.

\section{Preliminary}

We consider a multi-valued function
\begin{align*}
u(t)=\prod_{i=1}^n t_i^{\lambda_i} \prod_{i=1}^{n} (t_{i}-t_{i-1})^{\mu_i}
(t_n-z)^{\mu_{n+1}}
\end{align*}
for $t=(t_1,\ldots,  t_n)\in\C^n$ with parameters $\lambda_i,\mu_i\in\C$ 
defined on 
\begin{align*}
T_z=\C^n-\bigcup_{i=1}^n \{t_i=0\} \cup \bigcup_{i=1}^{n+1} \{t_{i-1}-t_i=0\}, 
\end{align*}
where 
\begin{align*}
t_0=1,\quad t_{n+1}=z. 
\end{align*}

The function $u(t)$ is the integrand of an integral representation of the generalized 
hypergeometric series ${}_{n+1}F_n(z)$.  Namely, we have  
\begin{align}\label{eq_integral_representation_for_GHS}
{}_{n+1}F_n\left( \begin{matrix}
\alpha_1 ,  \dots ,  \alpha_{n+1} \\
\beta_1 ,  \dots ,   \beta_n \\
\end{matrix} ; z \right)=&\prod_{1\le i\le n}B(\alpha_i,\beta_i-\alpha_i)
 \int_{D^{(0)}_{n+1}} u(t)dt,
\end{align}
where $B(\alpha,\beta)$ is the Beta function and 
$D^{(0)}_{n+1}=\{t\in T_z\mid 1<t_1<t_2\cdots<t_n\}$ with 
\begin{equation}\label{eq_transformation_lambda_alpha_beta}
\lambda_i=\alpha_{i+1}-\beta_i\quad (1 \leq i \leq n), \quad 
\mu_i=\beta_i-\alpha_i-1\quad (1 \leq i \leq n+1).
\end{equation}
Here, we suppose 
\begin{align*}
\mathrm{Re}(\alpha_i)>0,\quad \mathrm{Re}(\beta_i-\alpha_i)>0 \quad (1\le i\le n)
\end{align*}
for the convergence of the integral and fix the arguments as 
\begin{align*}
\mathrm{arg}(t_i)=\mathrm{arg}(t_i-t_{i-1})=0\quad (1\le i\le n),
\quad |\mathrm{arg}(t_n-z)|<\frac{\pi}{2}. 
\end{align*}

The formula \eqref{eq_integral_representation_for_GHS} implies 
that the domain $D^{(0)}_{n+1}$ of integration describes the asymptotic 
behaviour of the holomorphic solution at $z=0$. 
There are domains of integration yielding 
the asymptotic 
behaviours of the non-holomorphic solutions at $z=0$ 
with the characteristic exponents $1-\beta_i$ ($1\le i\le n$), 
and 
the non-holomorphic solutions at $z=\infty$ 
with the characteristic exponents $1-\beta_i$ ($1\le i\le n+1$) 
\cite{Mimachi intersection numbers for n+1Fn}. 
In order to define integrals for domains of integration, 
we should fix branches of $u(t)$.

For the convenience, suppose $z \in \R-\{0,1\}$. 
We first fix branches of $u(t)$ for real $z$, and then we consider 
analytic continuation of $u(t)$ for general $z$. 
For a simply connected domain $D$ in the real part $T_\R$ of $T_z$,  
let us define $u_D(t)$ as
\begin{align*}
u_D(t)=\prod_{i=1}^n (\epsilon_it_i)^{\lambda_i} 
\prod_{i=1}^{n+1} (\eta_i(t_{i-1}-t_i))^{\mu_i}, 
\end{align*}
where $\epsilon_i,\eta_i\in\{1,-1\}$ such that $\epsilon_it_i>0$ 
and $\eta_i(t_{i-1}-t_i)>0$ on $D$. We fix the arguments of all $\epsilon_i t_i$ 
and $\eta_i(t_{i-1}-t_i)$ as $0$. The function $u_D(t)$ 
is a branch of $u(t)$ multiplied by a scalar.

In what follows, 
for convenience we use 
\begin{align*}
&e(A) =\exp (\pi \sqrt{-1} A), \quad
s(A) =\sin (\pi A) \quad (A \in \C), 
\\
&\alpha_{i,j}=\sum_{s=i}^j\alpha_s,\quad 
\beta_{i,j}=\sum_{s=i}^j\beta_s\quad (i<j), 
\end{align*}
and for the exponents $\lambda_i, \mu_i$ of $u(t)$  
\begin{align*}
\lambda_{i, j} &=
\begin{cases}
\lambda_i + \cdots +\lambda_j & (i \leq j) , \\
0 & (i=j+1) , \\
-(\lambda_{j+1}+ \cdots +\lambda_{i-1}) & (i \geq j+2) , 
\end{cases} \\
\mu_{i, j} &=
\begin{cases}
\mu_i + \cdots +\mu_j & (i \leq j) , \\
0 & (i=j+1) , \\
-(\mu_{j+1}+ \cdots +\mu_{i-1}) & (i \geq j+2) ,
\end{cases} \\
e_{i, j} &= e(\lambda_{i, j}), \\
\tilde{e}_{i, j} &= e(\mu_{i, j}).   
\end{align*}
We note that for all $i$ we have 
\begin{align*}
\lambda_{i}=\lambda_{i, i}, 
\quad \mu_{i}=\mu_{i, i}.
\end{align*}

\subsection{Connection problem between fundamental solutions at $z=0$ and $z=\infty$}

In this subsection, we recall the results in \cite{Mimachi intersection numbers for n+1Fn}. 
We fix $z \in \C$ such that $z < 0$. Set  
\begin{align*}
D_i^{(0)}=&\{t\in T_\R \mid z<t_n< \cdots <t_i <0,\ 1<t_1< \cdots < t_{i-1} \} \quad (1 \leq i \leq  n+1),
\\
D_i^{(\infty)}=&\{t\in T_\R \mid t_i< \cdots <t_n <z,\ 0<t_{i-1}< \cdots < t_1< 1 \} \quad (1 \leq i \leq  n+1).
\end{align*}
The domains of integration in the case of $n=2$ are pictured as follows.
\begin{center}
\begin{tikzpicture}
\draw (-3,0)--(3.5,0)node[anchor=west]{$t_2=0$};
\draw [color=red](-3,-1.5)--(3.5,-1.5)node[anchor=west]{$t_2=z$};
\draw (0,3.5)--(0,-3)node[anchor=north]{$t_1=0$};
\draw (2,3.5)--(2,-3)node[anchor=north]{$t_1=1$};
\draw (-3,-3)--(3.5,3.5)node[anchor=west]{$t_2=t_1$};
\coordinate node at (2.5,3) {$D_1^{(0)}$};
\coordinate node at (2.5,-0.75) {$D_2^{(0)}$};
\coordinate node at (1.2,0.5) {$D_3^{(\infty)}$};
\coordinate node at (1,-2) {$D_2^{(\infty)}$};
\coordinate node at (-0.5,-1) {$D_3^{(0)}$};
\coordinate node at (-2.5,-2) {$D_1^{(\infty)}$};
\end{tikzpicture} 
\end{center}

The orientation of the domains $D_i^{(0)}$ or $D_i^{(\infty)}$ 
of integration is fixed 
to be natural one induced from $T_\R$. 
In \cite{Mimachi intersection numbers for n+1Fn}, 
it was shown that the domains $D_i^{(0)}$ 
of integration give a fundamental system of solutions at $z=0$, 
and $D_{i}^{(\infty)}$ gives a fundamental system of solutions at $z=\infty$. 
\begin{prop}
[\cite{Mimachi intersection numbers for n+1Fn}, Proposition 2.1]
\label{[M2], prop 2.1}
(1) For a fixed $i$ such that  $1 \leq i \leq n+1$,  
if ${\rm Re}(\alpha_i-\beta_s+1)>0$ and ${\rm Re}(\beta_s-\alpha_s)>0$
for $1 \leq s \leq n+1$ with $s \neq i$ and $|z|>1$, then we have 
\begin{align}
&\int _{D_i^{(\infty)}} u_{D_i^{(\infty)}}(t)dt_1 \cdots dt_n \label{sol_infty} 
=\prod_{1 \leq s \leq n+1, s \neq i} B(\alpha_i-\beta_s+1, \beta_s-\alpha_s)f_i^{(\infty)}(z),
\end{align}
where 
\begin{align*}
f_i^{(\infty)}(z)&=(-z)^{-\alpha_i}  {}_{n+1} F _n \left( \begin{matrix}
\alpha_i-\beta_1+1 ,  \alpha_i-\beta_2+1, \dots ,  \alpha_i-\beta_{n+1}+1 \\
\alpha_i-\alpha_1+1 ,  \dots ,  \widehat{\alpha_i-\alpha_i+1} ,  \dots ,  \alpha_i-\alpha_{n+1}+1 \\
\end{matrix} ; \frac{1}{z} \right). 
\end{align*}

(2) For a fixed $i$ such that  $1 \leq i \leq n+1$,  
if ${\rm Re}(\alpha_s-\beta_i+1)>0$, ${\rm Re}(\beta_s-\alpha_s)>0$ 
for $1 \leq s \leq n+1$ with $s \neq i$ and $|z|<1$, then we have 
\begin{align}
&\int _{D_i^{(0)}} u_{D_i^{(0)}}(t)dt_1 \cdots dt_n \label{sol_0} 
=\prod_{1 \leq s \leq n+1, s \neq i} B(\alpha_s-\beta_i+1, \beta_s-\alpha_s)f_i^{(0)}(z),
\end{align}
where 
\begin{align*}
f_i^{(0)}(z)&=(-z)^{1-\beta_i} {}_{n+1} F _n \left( \begin{matrix}
\alpha_1-\beta_i+1 ,  \alpha_2-\beta_i+1, \dots ,  \alpha_{n+1}-\beta_i+1 \\
\beta_1-\beta_i+1 ,  \dots ,  \widehat{\beta_i-\beta_i+1} ,  \dots ,  \beta_{n+1}-\beta_i+1 \\
\end{matrix} ; z \right). 
\end{align*}
\qed 
\end{prop}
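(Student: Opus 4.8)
I will prove (2); part (1) is entirely analogous. Note first that the case $i=n+1$ is essentially \eqref{eq_integral_representation_for_GHS}: here $D^{(0)}_{n+1}=\{1<t_1<\cdots<t_n\}$, $f^{(0)}_{n+1}(z)={}_{n+1}F_n(z)$, and $\prod_{s\ne n+1}B(\alpha_s-\beta_{n+1}+1,\beta_s-\alpha_s)=\prod_{s=1}^n B(\alpha_s,\beta_s-\alpha_s)$, so only the identification of the branch $u_{D^{(0)}_{n+1}}$ with the integrand used in \eqref{eq_integral_representation_for_GHS} has to be checked. The plan for general $i$ is to write down an explicit multiplicative change of variables carrying $D^{(0)}_i$ diffeomorphically onto the open cube $(0,1)^n$, under which $u_{D^{(0)}_i}(t)\,dt$ becomes $(-z)^{1-\beta_i}$ times the integrand of the classical Euler-type (iterated-beta) representation of a shifted ${}_{n+1}F_n$; one then evaluates by expanding a single factor in powers of $z$ and integrating term by term.

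Concretely, indexing the cube variables by $k\in\{1,\dots,n+1\}\setminus\{i\}$, set
\begin{align*}
t_j=\frac{1}{v_1v_2\cdots v_j}\quad(1\le j\le i-1),\qquad
t_j=z\,v_{j+1}v_{j+2}\cdots v_{n+1}\quad(i\le j\le n),
\end{align*}
with empty products equal to $1$ and $t_{n+1}=z$ as always; thus for $i=n+1$ this is exactly the substitution underlying \eqref{eq_integral_representation_for_GHS}, while for $i<n+1$ the variables $t_j$ with $j\ge i$ are ``anchored at $z$'' instead of continuing the chain from $t_0=1$. One checks directly that the defining inequalities of $D^{(0)}_i$ are equivalent to $v_k\in(0,1)$ for all $k\ne i$, so that the map is a bijection $D^{(0)}_i\to(0,1)^n$. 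Every factor of $u_{D^{(0)}_i}$ other than the ``middle'' factor $(t_{i-1}-t_i)^{\mu_i}$, together with the Jacobian, becomes a Laurent monomial in the $v_k$ and the $1-v_k$; and factoring $t_{i-1}=1/(v_1\cdots v_{i-1})$ out of $t_{i-1}-t_i$ gives
\begin{align*}
(t_{i-1}-t_i)^{\mu_i}=(v_1\cdots v_{i-1})^{-\mu_i}\Bigl(1-z\prod_{k\ne i}v_k\Bigr)^{\mu_i}.
\end{align*}

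A bookkeeping of exponents — which telescopes cleanly using $\lambda_j=\alpha_{j+1}-\beta_j$, $\mu_j=\beta_j-\alpha_j-1$ and the elementary identities among the $\lambda_{i,j},\mu_{i,j}$ — shows that the net power of $v_k$ is $\alpha_k-\beta_i$, that of $1-v_k$ is $\beta_k-\alpha_k-1$, and the power of $(-z)$ collected outside the integral is $1-\beta_i$. Since $-\mu_i=\alpha_i-\beta_i+1$, expanding
\begin{align*}
\Bigl(1-z\prod_{k\ne i}v_k\Bigr)^{\mu_i}=\sum_{m\ge 0}\frac{(\alpha_i-\beta_i+1)_m}{m!}\Bigl(z\prod_{k\ne i}v_k\Bigr)^m
\end{align*}
and integrating term by term — legitimate for $|z|<1$ under the stated positivity of real parts, which is precisely what makes each beta integral converge and the interchange valid — reduces everything to
\begin{align*}
\int_0^1 v_k^{\alpha_k-\beta_i+m}(1-v_k)^{\beta_k-\alpha_k-1}\,dv_k
=B(\alpha_k-\beta_i+1,\beta_k-\alpha_k)\,\frac{(\alpha_k-\beta_i+1)_m}{(\beta_k-\beta_i+1)_m}.
\end{align*}
Multiplying over $k\ne i$ and restoring the factor from the binomial series (which supplies the single numerator parameter $\alpha_i-\beta_i+1$ not paired with a beta integral) reassembles exactly $\prod_{k\ne i}B(\alpha_k-\beta_i+1,\beta_k-\alpha_k)\,f^{(0)}_i(z)$.

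The main obstacle is not the exponent algebra (it collapses) but the sign/branch bookkeeping: one must verify that the $\epsilon_j,\eta_j$ defining $u_{D^{(0)}_i}$, the sign of the Jacobian of the substitution, and the orientation of $D^{(0)}_i$ relative to the standard orientation of $(0,1)^n$ combine to yield the constant exactly $+1$ and not a root of unity, and that the substitution really is onto the whole cube with nothing omitted. For part (1) one runs the same scheme on $D^{(\infty)}_i$, with the substitution adapted so that after factoring out the appropriate power the middle factor $(t_{i-1}-t_i)^{\mu_i}$ becomes a constant multiple of $\bigl(1-z^{-1}\prod_{k\ne i}w_k\bigr)^{\mu_i}$; this produces the Euler-type representation of $f^{(\infty)}_i(z)$ as a series in $1/z$, with the roles of the $\alpha$'s and the $\beta$'s interchanged.
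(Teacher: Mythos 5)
Your argument is correct, but be aware that the paper contains no proof of this proposition to compare it with: the statement is imported verbatim from \cite{Mimachi intersection numbers for n+1Fn} and closed with an immediate \textit{q.e.d.} That said, what you propose is exactly the technique the authors do use when they prove a statement of this type themselves, namely Proposition \ref{holo_sols_at_1}: a multiplicative change of variables carrying the simplex-like domain onto the cube $(0,1)^n$, a binomial expansion of the single factor coupling the variables, termwise integration into beta integrals, and the identity $B(\alpha+m,\beta)=\frac{(\alpha)_m}{(\alpha+\beta)_m}B(\alpha,\beta)$ to produce the Pochhammer ratios. I verified your exponent bookkeeping: with $\lambda_j=\alpha_{j+1}-\beta_j$ and $\mu_j=\beta_j-\alpha_j-1$, the net power of $v_k$ telescopes to $\alpha_k-\beta_i$ for every $k\neq i$ (separately for $k<i$, where the Jacobian contributes $-(i-k+1)$, and for $k>i$, where it contributes $k-i-1$ plus one power of $z$ per variable), the power of $1-v_k$ is $\mu_k$, and the total power of $-z$ is $1-\beta_i$; the leftover numerator parameter $\alpha_i-\beta_i+1=-\mu_i$ is supplied by the binomial series, so the sum reassembles $f_i^{(0)}$ exactly. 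The issue you single out as the main obstacle --- the sign and branch constant --- is actually immediate and deserves to be stated as such rather than left open: for real parameters and $z<0$ the branch $u_{D_i^{(0)}}$ is by its definition a positive function on $D_i^{(0)}$, the orientation is the standard one of $\R^n$, and every factor of the transformed integrand on $(0,1)^n$ is positive, so the constant is $+1$ with no case analysis; the general parameter range then follows by analytic continuation. One caution about your base case: the $i=n+1$ instance of the proposition gives $\int_{D^{(0)}_{n+1}}u\,dt=\prod_{s=1}^n B(\alpha_s,\beta_s-\alpha_s)\,{}_{n+1}F_n(z)$, which agrees with the classical Euler integral but not with \eqref{eq_integral_representation_for_GHS} as printed (there the beta product multiplies the integral instead of dividing it); this appears to be a typo in \eqref{eq_integral_representation_for_GHS}, so do not anchor the constant to that formula. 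Finally, the interchange of sum and integral for $|z|<1$ should get its one-line justification via absolute convergence under the stated positivity of real parts.
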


Let $F_D(z)=\int_D u_D(t)dt$. 
\begin{prop}[\cite{Mimachi intersection numbers for n+1Fn}, Proposition 2.5, 
Theorem 2.6]
\label{[M2], prop 2.5}
For $i$ and $j$ such that $1\leq i,j\leq n+1$, suppose that 
\begin{align*}
&\mathrm{Re}(\alpha_i-\beta_j+1)>0,\quad \mathrm{Re}(\beta_j-\alpha_j)>0
\quad (i\neq j), 
\\
&\alpha_i-\beta_j\notin\Z,\quad \beta_i-\beta_j\notin\Z\quad (i\neq j). 
\end{align*}
Then we have 
\begin{equation}\label{eq_connection_formula_i0}
F_{D_i^{(\infty)}}(z)=\sum_{1 \leq j \leq n+1} \frac{s(\beta_i-\alpha_i)}{s(\beta_j-\alpha_i)} \prod_{1 \leq s \leq n+1,\atop s \neq j} \frac{s(\alpha_s-\beta_j)}{s(\beta_s-\beta_j)} \times F_{D_j^{(0)}}(z) 
\end{equation}
for $1\leq i \leq n+1$. \qed 
\end{prop}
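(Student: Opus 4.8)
The plan is to lift the integrals $F_D(z)$ to a pairing in twisted (co)homology and read off the connection matrix from intersection numbers of the underlying loaded cycles, in the spirit of \cite{Mimachi intersection numbers for n+1Fn}. Fix $z<0$, let $\mathcal{L}$ be the rank-one local system on $T_z$ attached to the multi-valued integrand $u(t)$ (equivalently to $\omega=d\log u$), and let $\mathcal{L}^{\vee}$ be its dual, obtained by negating all exponents $\lambda_i,\mu_i$. The form $\varphi=dt_1\wedge\cdots\wedge dt_n$ is a twisted cocycle, and each simply connected chamber $D\subset T_\R$ together with its branch $u_D$ determines, after the usual regularization of the unbounded ends, a loaded cycle $[D]_\omega$ with $F_D(z)=\langle[D]_\omega,\varphi\rangle$; hence any linear relation among the $[D]_\omega$ in $H_n(T_z,\mathcal{L})$ immediately gives the corresponding relation among the $F_D(z)$. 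Since the generalized hypergeometric equation has order $n+1$, the space $H_n(T_z,\mathcal{L})$ is $(n+1)$-dimensional for non-resonant parameters, and Proposition~\ref{[M2], prop 2.1} together with the linear independence of the $f^{(0)}_j$ (resp.\ the $f^{(\infty)}_i$), which holds under $\alpha_i-\beta_j,\beta_i-\beta_j\notin\Z$, shows that $\{[D^{(0)}_j]_\omega\}_{j=1}^{n+1}$ and $\{[D^{(\infty)}_i]_\omega\}_{i=1}^{n+1}$ are two bases of it. So \eqref{eq_connection_formula_i0} is precisely the assertion that the change-of-basis matrix $C$ in $[D^{(\infty)}_i]_\omega=\sum_{j}C_{ij}[D^{(0)}_j]_\omega$ has the displayed entries.

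To pin $C$ down I would bring in the intersection pairing $I\colon H^{\mathrm{lf}}_n(T_z,\mathcal{L})\times H_n(T_z,\mathcal{L}^{\vee})\to\C$ and the reciprocal cycles $[D^{(0)}_k]_{-\omega}$. Pairing the change-of-basis identity against $[D^{(0)}_k]_{-\omega}$ gives
\begin{equation*}
I\!\left([D^{(\infty)}_i]_\omega,[D^{(0)}_k]_{-\omega}\right)=\sum_{1\le j\le n+1}C_{ij}\,I\!\left([D^{(0)}_j]_\omega,[D^{(0)}_k]_{-\omega}\right),
\end{equation*}
so that $C=H^{\infty 0}(H^{00})^{-1}$, where $H^{\infty 0}_{ik}=I([D^{(\infty)}_i]_\omega,[D^{(0)}_k]_{-\omega})$ and $H^{00}_{jk}=I([D^{(0)}_j]_\omega,[D^{(0)}_k]_{-\omega})$. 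Each of these numbers is local: two staircase chambers of $T_\R$ meet either in a common region, along one of the coordinate faces $\{t_\ell=0\}$, $\{t_\ell=1\}$, $\{t_\ell=z\}$, $\{t_\ell=t_{\ell\pm1}\}$, or not at all, and the regularized contribution of each meeting is a product of elementary monodromy factors of the form $1/(1-e_{a,b}^{2})$ and $1/(1-\tilde{e}_{a,b}^{2})$, which is the standard recipe for intersection numbers of loaded simplices. Computing $H^{00}$ (whose entries are simple enough that its inverse is explicit) and $H^{\infty 0}$ this way, multiplying, and simplifying, using $s(A)=\bigl(e(A)-e(-A)\bigr)/(2\sqrt{-1})$ and substituting $\lambda_i=\alpha_{i+1}-\beta_i$, $\mu_i=\beta_i-\alpha_i-1$ so that the telescoping products of $e_{i,j}$ and $\tilde{e}_{i,j}$ collapse, should produce the ratios $s(\beta_i-\alpha_i)/s(\beta_j-\alpha_i)$ and $\prod_{s\neq j}s(\alpha_s-\beta_j)/s(\beta_s-\beta_j)$.

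A more elementary alternative is to deform $D^{(\infty)}_i$ by hand, pushing it one integration variable at a time across the deleted hyperplanes; each crossing produces an adjacent chamber together with a phase $e(\pm\lambda_{a,b})$ or $e(\pm\mu_{a,b})$, and on collecting the resulting linear combination of the $D^{(0)}_j$ the finite geometric sums of phases reorganize into quotients of sines. In either approach the main obstacle is the combinatorial bookkeeping for general $n$: controlling which chambers, or which face-intersections, occur, and proving that the resulting long products collapse to the two-factor form in \eqref{eq_connection_formula_i0}, that is, verifying the matrix identity $C=H^{\infty 0}(H^{00})^{-1}$ in closed form. The remaining ingredients are routine: the positivity hypotheses $\mathrm{Re}(\alpha_i-\beta_j+1)>0$ and $\mathrm{Re}(\beta_j-\alpha_j)>0$ guarantee convergence of the integrals and the applicability of Proposition~\ref{[M2], prop 2.1}, while the non-resonance hypotheses guarantee that $H_n(T_z,\mathcal{L})$ has the expected dimension, that the two families are bases, and that the pairing $I$ is non-degenerate so that $C$ is well defined.
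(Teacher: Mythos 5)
This proposition is not proved in the paper at all: it is imported verbatim from Mimachi (Proposition 2.5 and Theorem 2.6 of \cite{Mimachi intersection numbers for n+1Fn}), which is why the statement ends with a QED box and no proof environment follows. So the relevant benchmark is Mimachi's argument, and your outline does correctly describe its architecture: realize each $F_D(z)$ as the pairing of a loaded cycle with the cocycle $dt_1\wedge\cdots\wedge dt_n$, use non-resonance to see that $\{[D_j^{(0)}]\}$ and $\{[D_i^{(\infty)}]\}$ are two bases of the $(n+1)$-dimensional twisted homology, and extract the change-of-basis matrix as $H^{\infty 0}(H^{00})^{-1}$ from intersection numbers against a dual family of loaded cycles. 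The role you assign to the hypotheses (positivity for convergence and for Proposition \ref{[M2], prop 2.1}, non-resonance for the dimension count and the non-degeneracy of the pairing) is also the right one.

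As a proof, however, the proposal has a genuine gap rather than an omission of routine detail: the entire content of \eqref{eq_connection_formula_i0} is the closed-form evaluation, and that is precisely the part you leave at the level of ``computing, multiplying, and simplifying should produce the ratios.'' Concretely, you would still need (a) the explicit intersection numbers $H^{00}_{jk}$ and $H^{\infty 0}_{ik}$ for the staircase chambers, which requires regularizing the unbounded ends of $D_j^{(0)}$ and $D_i^{(\infty)}$ and determining exactly which pairs of chambers share which faces of $\{t_\ell=0\},\{t_\ell=1\},\{t_\ell=z\},\{t_\ell=t_{\ell\pm1}\}$; (b) an explicit inverse of $H^{00}$; and (c) the identity showing that the resulting product of factors $1/(1-e_{a,b}^{2})$, $1/(1-\tilde e_{a,b}^{2})$ telescopes to $\frac{s(\beta_i-\alpha_i)}{s(\beta_j-\alpha_i)}\prod_{s\neq j}\frac{s(\alpha_s-\beta_j)}{s(\beta_s-\beta_j)}$. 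None of these steps is supplied, and together they constitute essentially all of the cited paper. Your ``elementary alternative'' of pushing $D_i^{(\infty)}$ across the deleted hyperplanes one variable at a time is in fact the deformation technique the present paper does carry out (for the $z=1$ problem) in Lemma \ref{lem_analytic_continuation_of_domain} and the induction in Theorem \ref{thm_connection_formula_10}; as that proof shows, tracking the phases and resumming the resulting geometric sums into sine quotients is itself a substantial multi-page computation, so asserting that it ``reorganizes into quotients of sines'' cannot be taken as given.
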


\section{Connection problem between fundamental system of solutions at $z=0$ and $z=1$}

\subsection{Asymptotic behaviour}
We fix $z \in \C$ such that $0<z < 1$. Set
\begin{align}
\tilde{D}_i^{(0)}&=\{t\in T_\R \mid 0<t_i< \cdots <t_n <z,\ 1<t_1< \cdots < t_{i-1} \} \quad (1 \leq i \leq  n+1), \label{domain0_0<z<1} 
\\
\tilde{D}_i^{(1)}&=\{t\in T_\R | t_i< \cdots <t_n <0,\ 0<t_{i-1}< \cdots < t_1< 1 \} \quad (1 \leq i \leq  n), \label{holo_domain1_0<z<1}
 \\
\tilde{D}_{n+1}^{(1)}&=\{t\in T_\R | z<t_n< \cdots < t_1< 1 \}. \label{nonholo_domain1_0<z<1}
\end{align}
The domains of integration in the case of $n=2$ are pictured as follows. 
\begin{center}
\begin{tikzpicture}
\draw [color=red](-3,0)--(3.5,0)node[anchor=west]{$t_2=z$};
\draw (-3,-1.5)--(3.5,-1.5)node[anchor=west]{$t_2=0$};
\draw (-1.5,3.5)--(-1.5,-3)node[anchor=north]{$t_1=0$};
\draw (2,3.5)--(2,-3)node[anchor=north]{$t_1=1$};
\draw (-3,-3)--(3.5,3.5)node[anchor=west]{$t_2=t_1$};
\coordinate node at (2.5,3) {$\tilde{D}_3^{(0)}$};
\coordinate node at (2.5,-0.75) {$\tilde{D}_2^{(0)}$};
\coordinate node at (1.25,0.5) {$\tilde{D}_3^{(1)}$};
\coordinate node at (-1,-0.5) {$\tilde{D}_1^{(0)}$};
\coordinate node at (0.25,-2) {$\tilde{D}_2^{(1)}$};
\coordinate node at (-2.5,-2) {$\tilde{D}_{1}^{(1)}$};
\end{tikzpicture} 
\end{center}

The orientation of the domains $\tilde{D}_i^{(0)}$ or $\tilde{D}_i^{(1)}$ 
of integration is fixed 
to be natural one induced from $T_\R$. In \cite{Mimachi intersection numbers for n+1Fn}, 
it was shown that the domains $\tilde{D}_i^{(0)}$ 
of integration give a fundamental system of solutions at $z=0$, 
and $\tilde{D}_{n+1}^{(1)}$ gives the non-holomorhic solution at $z=1$. 
\begin{prop}[\cite{Mimachi intersection numbers for n+1Fn}, 
Proposition 3.1]
\label{[M2], prop 3.1}
(1) For a fixed $i$ such that  $1 \leq i \leq n+1$,  
if ${\rm Re}(\alpha_s-\beta_i+1)>0$ and ${\rm Re}(\beta_s-\alpha_s)>0$
for $1 \leq s \leq n+1$ with $s \neq i$ and $|z|<1$, then we have 
\begin{align*}
&\int _{\tilde{D}_i^{(0)}} u_{\tilde{D}_i^{(0)}}(t)dt_1 \cdots dt_n
=\prod_{1 \leq s \leq n+1,\atop s \neq i} B(\alpha_s-\beta_i+1, \beta_s-\alpha_s)f_i^{(0)}(z),
\end{align*}
where 
\begin{align*}
f_i^{(0)}(z)&=z^{1-\beta_i} {}_{n+1} F _n \left( \begin{matrix}
\alpha_1-\beta_i+1 ,  \alpha_2-\beta_i+1, \dots ,  \alpha_{n+1}-\beta_i+1 \\
\beta_1-\beta_i+1 ,  \dots ,  \widehat{\beta_i-\beta_i+1} ,  \dots ,  \beta_{n+1}-\beta_i+1 \\
\end{matrix} ; z \right).  
\end{align*}

(2)  
If ${\rm Re}(\beta_{1,s}-\alpha_{1,s})>0$ 
for $1 \leq s \leq n$ and 
${\rm Re}(\beta_s-\alpha_s)>0$
for $1 \leq s \leq n+1$, and $|1-z|<1$, then we have
\begin{align}
&\int _{\tilde{D}_{n+1}^{(1)}} u_{\tilde{D}_{n+1}^{(1)}}(t)dt_1 \cdots dt_n \label{nonholosol_1}
=\prod_{s=1}^{n} B(\beta_{1,s}-\alpha_{1,s}, \beta_{s+1}-\alpha_{s+1})f_{n+1}^{(1)}(z), \end{align}
where
\begin{align*}
f_{n+1}^{(1)}(z)=&(1-z)^{\beta_{1,n}-\alpha_{1,n+1}}  \sum_{i_1, \dots, i_n \geq 0} 
\prod_{s=1}^{n} \frac{(\beta_s-\alpha_{s+1})}{i_s!}\prod_{s=1}^{n}\frac{(\sum_{k=1}^{s} (\beta_k-\alpha_k))_{i_1+\cdots+i_s}}
{(\sum_{k=1}^{s+1}(\beta_k-\alpha_k))_{i1+\cdots+i_s}}(1-z)^{i_1+\cdots+i_n}.
\end{align*}
\qed 
\end{prop}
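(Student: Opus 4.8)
The plan is to prove both formulas by one recipe: change variables so that the ``small'' integration variables sweep out a standard simplex, read off from that substitution the leading power of the relevant local parameter, and evaluate what is left by expanding the remaining analytic factors in a power series and integrating term by term over the simplex. For part~(2), put $w=1-z$, so $w\in(0,1)$ and $|w|<1$. On $\tilde D_{n+1}^{(1)}$ one has $1=t_0>t_1>\cdots>t_n>t_{n+1}=z$ with every $t_i$ and every $t_{i-1}-t_i$ positive, so every factor entering $u_{\tilde D_{n+1}^{(1)}}$ has a positive base and argument $0$. Substituting $t_i=1-ws_i$ $(1\le i\le n)$ turns the chain into $0=s_0<s_1<\cdots<s_n<s_{n+1}=1$, and one checks $(t_{i-1}-t_i)^{\mu_i}=w^{\mu_i}(s_i-s_{i-1})^{\mu_i}$ for \emph{all} $1\le i\le n+1$ (using $t_0=1$, $t_{n+1}=z$), while $t_i^{\lambda_i}=(1-ws_i)^{\lambda_i}$ and the Jacobian contributes $w^n$. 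Since $\beta_{n+1}=1$ the total power of $w$ is $n+\mu_{1,n+1}=\beta_{1,n}-\alpha_{1,n+1}$, hence
\begin{align*}
u_{\tilde D_{n+1}^{(1)}}(t)\,dt=(1-z)^{\beta_{1,n}-\alpha_{1,n+1}}\prod_{k=1}^{n}(1-ws_k)^{\lambda_k}\prod_{k=1}^{n+1}(s_k-s_{k-1})^{\mu_k}\,ds .
\end{align*}
Expanding $(1-ws_k)^{\lambda_k}=\sum_{i_k\ge0}\frac{(\beta_k-\alpha_{k+1})_{i_k}}{i_k!}(ws_k)^{i_k}$ (using $-\lambda_k=\beta_k-\alpha_{k+1}$), gathering the $w^{i_k}$ into $(1-z)^{i_1+\cdots+i_n}$, and interchanging summation and integration (legitimate, see below) reduces the evaluation to computing, for each $(i_1,\dots,i_n)$, the iterated Beta integral
\begin{align*}
J(i_1,\dots,i_n)=\int_{0<s_1<\cdots<s_n<1}\prod_{k=1}^{n}s_k^{\,i_k}\prod_{k=1}^{n+1}(s_k-s_{k-1})^{\mu_k}\,ds,\qquad s_0=0,\ s_{n+1}=1 .
\end{align*}

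Integrating out $s_1,\dots,s_n$ one after another, each step an elementary Euler--Beta integral, gives $J(i_1,\dots,i_n)=\prod_{k=1}^{n}B\bigl(M_k+\mu_{1,k}+k,\ \mu_{k+1}+1\bigr)$ with $M_k=i_1+\cdots+i_k$; since $\mu_{1,k}+k=\beta_{1,k}-\alpha_{1,k}$ and $\mu_{k+1}+1=\beta_{k+1}-\alpha_{k+1}$ this is $\prod_{k=1}^{n}B(\beta_{1,k}-\alpha_{1,k}+M_k,\ \beta_{k+1}-\alpha_{k+1})$. In particular $J(0,\dots,0)=\prod_{s=1}^{n}B(\beta_{1,s}-\alpha_{1,s},\beta_{s+1}-\alpha_{s+1})$ is exactly the Beta prefactor in \eqref{nonholosol_1}, and using $B(a+M,b)/B(a,b)=(a)_M/(a+b)_M$ together with $\beta_{1,k}-\alpha_{1,k}+\beta_{k+1}-\alpha_{k+1}=\beta_{1,k+1}-\alpha_{1,k+1}$,
\begin{align*}
\frac{J(i_1,\dots,i_n)}{J(0,\dots,0)}=\prod_{k=1}^{n}\frac{\bigl(\sum_{j=1}^{k}(\beta_j-\alpha_j)\bigr)_{M_k}}{\bigl(\sum_{j=1}^{k+1}(\beta_j-\alpha_j)\bigr)_{M_k}} .
\end{align*}
Feeding this back into the multiple sum and pulling out $J(0,\dots,0)$ produces precisely $\bigl(\prod_{s=1}^{n}B(\beta_{1,s}-\alpha_{1,s},\beta_{s+1}-\alpha_{s+1})\bigr)f_{n+1}^{(1)}(z)$, i.e.\ \eqref{nonholosol_1}.

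Part~(1) is the same argument in its classical rank-one guise: on $\tilde D_i^{(0)}$ the variables $t_i,\dots,t_n$ lie in $(0,z)$ and the rest in $(1,\infty)$, so one substitutes $t_j=z\sigma_j$ $(i\le j\le n)$; collecting the powers of $z$ from the Jacobian and from the exponents $\lambda_j,\mu_j$ collapses to the single factor $z^{1-\beta_i}$, and expanding the finitely many residual $z$-dependent factors (such as $(t_{i-1}-z\sigma_i)^{\mu_i}$) and integrating term by term over $1<t_1<\cdots<t_{i-1}$ and over the $\sigma$-simplex reproduces the series $f_i^{(0)}(z)$ with the prefactor $\prod_{s\neq i}B(\alpha_s-\beta_i+1,\beta_s-\alpha_s)$ --- in effect the Euler integral \eqref{eq_integral_representation_for_GHS} transplanted to $\tilde D_i^{(0)}$. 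That the $n+1$ domains $\tilde D_i^{(0)}$ then give a fundamental system at $z=0$ follows from the linear independence of the $f_i^{(0)}$, whose characteristic exponents $1-\beta_i$ differ by non-integers.

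I expect bookkeeping, not ideas, to be the only difficulty. The one step needing a genuine (if routine) justification is the interchange of the multiple sum and the integral in part~(2): after replacing every exponent by its real part, the resulting series of positive terms is dominated by a convergent multiple hypergeometric-type series in $|1-z|<1$, and convergence here is exactly what the hypotheses $\mathrm{Re}(\beta_{1,s}-\alpha_{1,s})>0$ $(1\le s\le n)$ and $\mathrm{Re}(\beta_s-\alpha_s)>0$ $(1\le s\le n+1)$ guarantee --- the former controlling the successive Beta integrations, the latter integrability at the faces $s_k=s_{k-1}$ --- so Fubini applies; one must likewise keep track of the signs $\epsilon_i,\eta_i$ when handling $\tilde D_i^{(0)}$. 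The conceptual point is simply that the nested-simplex shape of $\tilde D_{n+1}^{(1)}$ is precisely what forces the unusual nested-Pochhammer series defining $f_{n+1}^{(1)}$.
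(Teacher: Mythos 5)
Your argument is correct and is essentially the same technique this paper itself deploys for the neighbouring Proposition \ref{holo_sols_at_1} (a change of variables mapping the domain onto a simplex, binomial expansion of the remaining $z$-dependent factors, term-by-term iterated Euler Beta integration, and the identity $B(\alpha+m,\beta)=\frac{(\alpha)_m}{(\alpha+\beta)_m}B(\alpha,\beta)$); the present proposition is stated in the paper without proof, being quoted from Mimachi, and your computation supplies exactly the expected derivation. As a useful byproduct, your calculation confirms that the factor $(\beta_s-\alpha_{s+1})$ in the displayed formula for $f_{n+1}^{(1)}$ should carry the Pochhammer subscript $i_s$, i.e.\ read $(\beta_s-\alpha_{s+1})_{i_s}$, since otherwise the leading coefficient would not match the Beta prefactor in \eqref{nonholosol_1}.
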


\begin{prop}\label{holo_sols_at_1}
For a fixed $i$ such that  $1 \leq i \leq n$,  if 
\begin{align*}
&\mathrm{Re}(\alpha_i-\beta_s+1)>0\quad (1\leq s\leq n+1,\ s\neq i),
\\
&\mathrm{Re}(\beta_s-\alpha_s)>0\quad (1\leq s\leq n,\ s\neq i),
\\
&\mathrm{Re}(\alpha_{n+1}-\beta_i+1)>0, 
\end{align*}
 and $|1-z|<1$, then we have 
\begin{align}
\label{eq_holosol_1}
\int _{\tilde{D}_{i}^{(1)}} u_{\tilde{D}_{i}^{(1)}}(t)dt_1 \cdots dt_n =&\prod_{s=1,s\neq i}^{n} B(\alpha_i-\beta_s+1, \beta_s-\alpha_s) 
  B(\alpha_i, \alpha_{n+1}-\beta_i+1) f_{i}^{(1)}(z), 
\end{align}
where 
\begin{align*}
f_{i}^{(1)}(z)=&\sum_{m_1, m_2 \geq 0}\frac{(\alpha_{n+1})_{m_1}
(\alpha_i-\beta_i+1)_{m_2}
 (\alpha_1)_{m_1+m_2}}{m_1!(\alpha_i+\alpha_{n+1}-\beta_i+1)_{m_1+m_2}} \sum_{m_3=0}^{m_2}
 \frac{(-1)^{m_3}}{ m_3 ! (m_2-m_3)!}
 \prod_{s=1,s\neq i}^{n}\frac{(\alpha_i-\beta_s+1)_{m_3}}
{(\alpha_i-\alpha_s+1)_{m_3}}
 (1-z)^{m_1}.
\end{align*}
\qed 
\end{prop}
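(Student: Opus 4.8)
The plan is to evaluate the $n$-fold integral on the left directly: first strip off the $z$-dependence, then compute the remaining $z$-free integral by iterated one-dimensional integrations.

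I would begin by making the branch explicit. On $\tilde D_i^{(1)}$ the real coordinates are ordered $t_n<\cdots<t_i<0<t_{i-1}<\cdots<t_1<1$ with $0<z<1$, so reading off the signs $\epsilon_j,\eta_j$ gives
\[
u_{\tilde D_i^{(1)}}(t)=\Bigl(\prod_{j=1}^{i-1}t_j^{\lambda_j}\Bigr)\Bigl(\prod_{j=i}^{n}(-t_j)^{\lambda_j}\Bigr)\Bigl(\prod_{j=1}^{n}(t_{j-1}-t_j)^{\mu_j}\Bigr)(z-t_n)^{\mu_{n+1}},
\]
all bases positive, and exhibits $\tilde D_i^{(1)}$ as the product region $\{1>t_1>\cdots>t_{i-1}>0\}\times\{0>t_i>\cdots>t_n\}$. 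Only the factor $(z-t_n)^{\mu_{n+1}}$ depends on $z$, and since $t_n<0$ here we have $1-t_n>1>|1-z|$; hence, using $\mu_{n+1}=-\alpha_{n+1}$ (as $\beta_{n+1}=1$),
\[
(z-t_n)^{\mu_{n+1}}=\sum_{m_1\ge0}\frac{(\alpha_{n+1})_{m_1}}{m_1!}(1-z)^{m_1}(1-t_n)^{\mu_{n+1}-m_1},
\]
which converges uniformly on $\tilde D_i^{(1)}$ for $|1-z|$ bounded away from $1$. Interchanging summation and integration — legitimate, once the stated $\mathrm{Re}$-conditions hold, by absolute convergence of the series of integrals — reduces the assertion to the following $z$-free identity: writing $I_{m_1}=\int_{\tilde D_i^{(1)}}\bigl(\prod_{j=1}^{i-1}t_j^{\lambda_j}\bigr)\bigl(\prod_{j=i}^{n}(-t_j)^{\lambda_j}\bigr)\bigl(\prod_{j=1}^{n}(t_{j-1}-t_j)^{\mu_j}\bigr)(1-t_n)^{\mu_{n+1}-m_1}\,dt$, one must show
\[
I_{m_1}=\Bigl(\prod_{s=1,\,s\ne i}^{n}B(\alpha_i-\beta_s+1,\beta_s-\alpha_s)\Bigr)B(\alpha_i,\alpha_{n+1}-\beta_i+1)\sum_{m_2\ge0}\frac{(\alpha_i-\beta_i+1)_{m_2}(\alpha_1)_{m_1+m_2}}{(\alpha_i+\alpha_{n+1}-\beta_i+1)_{m_1+m_2}}\sum_{m_3=0}^{m_2}\frac{(-1)^{m_3}}{m_3!(m_2-m_3)!}\prod_{s=1,\,s\ne i}^{n}\frac{(\alpha_i-\beta_s+1)_{m_3}}{(\alpha_i-\alpha_s+1)_{m_3}}.
\]

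Then I would compute $I_{m_1}$ by integrating one variable at a time, in an order chosen so that the bookkeeping stays manageable: each one-dimensional integral is an Euler integral $\int_0^a x^p(a-x)^q\,dx=a^{p+q+1}B(p+1,q+1)$ or, after a Pfaff transformation keeping the argument in the unit disc (note $0<\tfrac1{1-t_j}<1$ and $0<\tfrac1{1+|t_j|}<1$ on the negative variables), a convergent ${}_2F_1$, the accumulated exponent $p$ feeding the next parameter through $\Gamma(c+p)/\Gamma(c)=(c)_p$. Most of the integrations are of the first, degenerate kind and together produce $\prod_{s\ne i}B(\alpha_i-\beta_s+1,\beta_s-\alpha_s)$; the integration across the link $(t_{i-1}-t_i)^{\mu_i}$ — whose exponent is $\mu_i=-(\alpha_i-\beta_i+1)$ — produces the Pochhammer $(\alpha_i-\beta_i+1)_{m_2}$ and the denominator $(\alpha_i+\alpha_{n+1}-\beta_i+1)_{m_1+m_2}$, while the $t_i$-integration over $(-\infty,0)$ produces $B(\alpha_i,\alpha_{n+1}-\beta_i+1)$, the numerator $(\alpha_1)_{m_1+m_2}$, and the finite sum over $m_3$ with $\prod_{s\ne i}(\alpha_i-\beta_s+1)_{m_3}/(\alpha_i-\alpha_s+1)_{m_3}$. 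A final round of elementary Pochhammer manipulations, together with Gauss/Vandermonde summation of the ${}_2F_1$'s that come out at argument $1$, collapses the a priori multiple sum to the required expression.

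The main obstacle is exactly this collapse: carrying the Pochhammer symbols through all $n$ integrations and showing that an a priori $O(n)$-fold sum reduces to the double sum (with finite inner sum) defining $f_i^{(1)}$. A related delicate point is the choice of integration order, together with the Pfaff transformations, so that every intermediate hypergeometric series has argument in the unit disc and termwise integration is valid; the $\mathrm{Re}$-inequalities in the hypotheses and the assumption $|1-z|<1$ are exactly what guarantees convergence of all the integrals, validity of the rearrangements, and convergence of the series defining $f_i^{(1)}$. The special cases $i=1$ (no positive variables) and $i=n$ are handled in the same way and are slightly simpler.
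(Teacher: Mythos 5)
There is a genuine gap: the entire computational core of the proposition is deferred rather than proved. Your reduction to the $z$-free integrals $I_{m_1}$ is fine in spirit (it is the analogue of the paper's binomial expansion of the $z$-dependent factor), but the assertion that iterated one-dimensional integrations, Pfaff transformations and Gauss/Vandermonde summations ``collapse the a priori multiple sum to the required expression'' is exactly the statement to be proved, and you acknowledge as much by calling it the main obstacle. It is not at all clear that this route works: the positive chain $1>t_1>\cdots>t_{i-1}>0$ and the negative chain are coupled only through $(t_{i-1}-t_i)^{\mu_i}$, and on $\tilde D_i^{(1)}$ the ratio $t_{i-1}/(-t_i)$ is unbounded, so the natural binomial expansion of that link does not converge uniformly; integrating variable by variable generically produces nested hypergeometric functions of increasing order rather than ${}_2F_1$'s at argument $1$, and the provenance of the finite inner sum over $m_3$ with the product $\prod_{s\neq i}(\alpha_i-\beta_s+1)_{m_3}/(\alpha_i-\alpha_s+1)_{m_3}$ is never exhibited. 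The paper avoids all of this with the substitution $t_s=u_1\cdots u_s$ for $s\le i-1$ and $t_s=u_s^{-1}\cdots u_n^{-1}(u_n-1)$ for $s\ge i$, which maps $\tilde D_i^{(1)}$ onto the unit cube $(0,1)^n$; after two binomial expansions, of $\left(1-(1-z)u_n\right)^{\mu_{n+1}}$ (giving $m_1$) and of $\left(1-u_n(1-u_1\cdots u_{n-1})\right)^{\mu_i}$ (giving $m_2$ and $m_3$), every integral is an elementary Beta integral and the Pochhammer structure falls out of $B(\alpha+m,\beta)=\frac{(\alpha)_m}{(\alpha+\beta)_m}B(\alpha,\beta)$. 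Some such device is needed; without it your outline is a plan, not a proof.

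A secondary point: you have reversed the ordering of the negative variables. The definition \eqref{holo_domain1_0<z<1} is $t_i<t_{i+1}<\cdots<t_n<0$, so $t_i$ (not $t_n$) is the unbounded variable and $t_{j-1}-t_j<0$ for $i+1\le j\le n$; your branch formula $\prod_{j=1}^{n}(t_{j-1}-t_j)^{\mu_j}$ and the claim that the $t_i$-integration runs over $(-\infty,0)$ are therefore not consistent with the actual domain. Your convergence argument for the $m_1$-expansion survives (all negative variables satisfy $1-t_j>1>|1-z|$), but the bookkeeping of the iterated integrals you sketch would have to be redone with the correct orientation.
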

\begin{proof}
We change the integration variables as
\begin{align*}
t_s&=u_1u_2 \cdots u_s \ (1 \leq s \leq i-1) , \\
t_s&=u_s^{-1}u_{s+1}^{-1} \cdots u_n^{-1}(u_n-1) \ (i \leq s \leq n).
\end{align*}
Its Jacobian is 
\begin{align*}
\frac{\partial(t_1, \dots , t_n)}{\partial(u_1, \dots, u_n)}=u_1^{i-2}u_2^{i-3}\cdots u_{i-2}^{1}u_{i-1}^{0}u_{i}^{-2}u_{i+1}^{-3}\cdots u_{n-1}^{i-n-1}u_n^{i-n-2}(1-u_n)^{n-i}. 
\end{align*}
Hence, we have 
\begin{align*}
&\int _{\tilde{D}_{i}^{(1)}} u_{\tilde{D}_{i}^{(1)}}(t)dt_1 \cdots dt_n
\\
=&\int_{(0, 1)^n} \prod_{s=1}^{i-1}u_s^{\lambda_{s, i-1}+\mu_{s+1, i-1}+i-s-1}\prod_{s=i}^{n}u_s^{-\lambda_{i, s}-\mu_{i, s+1}+i-s-2}\prod_{s=1}^{i-1}(1-u_s)^{\mu_s}  \prod_{s=i}^{n-1}(1-u_s)^{\mu_{s+1}} (1-u_n)^{\lambda_{i, n}+\mu_{i+1, n}+n-i}
 \\
& \times \left( 1-u_n(1-u_1\cdots u_{n-1}) \right)^{\mu_j} \left( 1-(1-z)u_n \right)^{\mu_{n+1}}du_1\cdots du_n
\\
=&\sum_{m_1, m_2 \geq 0} \sum_{m_3=0}^{m_2} \frac{(-\mu_{n+1})_{m_1}(-\mu_i)_{m_2}}{m_1 ! m_3 ! (m_2-m_3)!}(1-z)^{m_1}  \prod_{s=1,s\neq i}^{n} \int_{0}^{1} u_s^{\lambda_{s, i-1}+\mu_{s+1, i-1}+i-s+m_3-1}(1-u_s)^{\mu_s}du_s
 \\
&\times \int_{0}^{1} u_n^{-\lambda_{i, n}-\mu_{i, n+1}+i-n+m_1+m_2-2}(1-u_n)^{\lambda_{i, n}+\mu_{i+1, n}+n-i}du_n
\end{align*}
 by the binomial theorem
 \begin{align*}
(1-(1-z)u_n)^{\mu_{n+1}}&= \sum_{m_1 \geq 0} \frac{(-\mu_{n+1})_{m_1}}{m_1 !} u_n^{m_1}(1-z)^{m_1} ,
 \\
(1-u_n(1-u_1\cdots u_{n-1}) )^{\mu_i}
&=\sum_{m_2 \geq 0} \sum_{m_3=0}^{m_2} \frac{(-\mu_i)_{m_2}(-1)^{m_3}}{m_3 !(m_2-m_3)!}u_1^{m_3}\cdots u_{n-1}^{m_3}u_n^{m_2}.  
\end{align*}
Using the formula
\begin{align*}
B(\alpha+m,\beta)=\frac{(\alpha)_m}{(\alpha+\beta)_m}B(\alpha,\beta), 
\end{align*}
we obtain 
 \begin{align*}
 &\int _{\tilde{D}_{i}^{(1)}} u_{\tilde{D}_{i}^{(1)}}(t)dt_1 \cdots dt_n
\\
=&\sum_{n_1, n_2 \geq 0} \sum_{n_3=0}^{n_2} \frac{(-\mu_{n+1})_{n_1}(-\mu_i)_{n_2}}{n_1 ! n_3 ! (n_2-n_3)!}(1-z)^{n_1}  \prod_{s=1,s\neq i}^{n} B(\lambda_{s, i-1}+\mu_{s+1, i-1}+i-s+n_3, \mu_s +1) \\
&\times B(-\lambda_{i, n}-\mu_{i, n+1}+i-n+n_1+n_2-1, \lambda_{i, n}+\mu_{i+1, n}+n-i+1) \\
=&\prod_{s=1,s\neq i}^{n} B(\lambda_{s, i-1}+\mu_{s+1, i-1}+i-s, \mu_s +1)  B(-\lambda_{i, n}-\mu_{i, n+1}+i-n-1, \lambda_{i, n}+\mu_{i+1, n}+n-i+1) \\
&\times \sum_{n_1, n_2 \geq 0} \sum_{n_3=0}^{n_2} \frac{(-\mu_{n+1})_{n_1}(-\mu_i)_{n_2}(-\lambda_{i, n}-\mu_{i, n+1}-n-1)_{n_1+n_2}}{n_1 !n_3 ! (n_2-n_3)!(-\mu_j-\mu_{n+1})_{n_1+n_2}} 
 \prod_{s=1,s\neq i}^{n}\frac{(\lambda_{s, i-1}+\mu_{s+1, i-1}+i-s)_{n_3}}{(\lambda_{s, i-1}+\mu_{s, i-1}+i-s+1)_{n_3}}
 (1-z)^{n_1}.
\end{align*}
Finally the transformation \eqref{eq_transformation_lambda_alpha_beta} yields
 \eqref{eq_holosol_1}.  
\end{proof}

\subsection{Main theorem}

In this subsection, we solve connection problem between fundamental  systems of solutions at 
singular points $0$ and $1$ for the generalized hypergeometric equation.

\begin{thm}\label{thm_connection_formula_10}
Assume for $1 \leq i,  j \leq n+1$
\begin{align*}
&\mathrm{Re}(\alpha_i-\beta_j+1)>0\quad (j\neq i),
\\
&\mathrm{Re}(\beta_j-\alpha_j)>0, 
\\
&\alpha_i-\beta_j \notin \Z, \quad \beta_i-\beta_j \notin \Z \quad (j\neq i). 
\end{align*}
Then   we have 
\begin{equation}\label{eq_connection_formula_10}
F_{\tilde{D}_i^{(1)}}(z)=\sum_{1\le j\le n+1}\frac{s(\beta_i-\alpha_i)s(\alpha_{n+1})}
{s(\beta_j-\alpha_i)s(\beta_j-\alpha_{n+1})}
\prod_{1\le k\le n+1,\atop k\neq j}\frac{s(\alpha_k-\beta_j)}{s(\beta_k-\beta_j)}
\times 
F_{\tilde{D}_j^{(0)}}(z)
\end{equation}
for $1\le i\le n$ and  
\begin{equation}\label{eq_connection_formula_10_n+1}
F_{\tilde{D}_{n+1}^{(1)}}(z)=
\sum_{j=1}^{n+1} \prod_{1 \leq k \leq n+1, \atop k \neq j} 
\frac{s(\alpha_k-\beta_j)}{s(\beta_k-\beta_j)} \times 
F_{\tilde{D}_j^{(0)}}(z).
\end{equation}
\qed
\end{thm}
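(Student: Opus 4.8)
The plan is to derive \eqref{eq_connection_formula_10} and \eqref{eq_connection_formula_10_n+1} by analytically continuing the connection formula \eqref{eq_connection_formula_i0} of Proposition~\ref{[M2], prop 2.5} in the variable $z$, from the region $z<0$ to the interval $0<z<1$. I regard each $F_D(z)$ as the period pairing of the loaded cycle $D$ with the twisted cocycle $u(t)\,dt$: as $z$ moves in $\C\setminus\{0,1\}$ these periods are multivalued analytic functions, and analytic continuation along a path carries both the cycles and the linear relations among them. Fix a path $\gamma\subset\C\setminus\{0,1\}$ joining a point $z_0<0$ to a point $z_1\in(0,1)$, lying in the lower half-plane, with $0<|z|<1$ along $\gamma$ (the choice of half-plane only affects the bookkeeping of signs below). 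Continuing \eqref{eq_connection_formula_i0} along $\gamma$ yields an identity of analytic functions near $z_1$, and the whole task is to rewrite both of its sides in the bases $\{F_{\tilde{D}_j^{(0)}}(z)\}_{j=1}^{n+1}$ and $\{F_{\tilde{D}_i^{(1)}}(z)\}_{i=1}^{n+1}$ attached to the domains \eqref{domain0_0<z<1} and \eqref{holo_domain1_0<z<1}--\eqref{nonholo_domain1_0<z<1}.

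The right-hand ($z=0$) side is handled at once by the explicit asymptotics. By Proposition~\ref{[M2], prop 2.1}(2), $F_{D_j^{(0)}}(z)$ equals $\prod_{s\neq j}B(\alpha_s-\beta_j+1,\beta_s-\alpha_s)$ times $(-z)^{1-\beta_j}\,{}_{n+1}F_n(\,\cdot\,;z)$, while by Proposition~\ref{[M2], prop 3.1}(1), $F_{\tilde{D}_j^{(0)}}(z)$ equals the \emph{same} product of Beta functions times $z^{1-\beta_j}\,{}_{n+1}F_n(\,\cdot\,;z)$ with the \emph{same} hypergeometric series. Since ${}_{n+1}F_n$ is holomorphic on $|z|<1$, continuing $F_{D_j^{(0)}}$ along $\gamma$ only carries the prefactor $(-z)^{1-\beta_j}$ once around the origin, so $F_{D_j^{(0)}}(z)\rightsquigarrow e(1-\beta_j)\,F_{\tilde{D}_j^{(0)}}(z)$ near $z_1$ (the upper half-plane would give the factor $e(-(1-\beta_j))$).

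The heart of the proof is the left-hand ($z=\infty$) side. As $z$ runs along $\gamma$ and crosses the origin, the loaded cycle $D_i^{(\infty)}$ (defined for $z<0$) must be deformed continuously inside $T_z$, whose defining arrangement contains the moving hyperplane $\{t_n=z\}$. I would carry this out with the deformation calculus for Selberg-type loaded cycles of \cite{Mimachi  Iwahori-Hecke}: the portion of the integration chain near $t_n=z$ gets dragged around the origin into a Pochhammer-type configuration, which splits in twisted homology into a segment ending at $0^-$ (recovering, up to a branch factor, the chain $\tilde{D}_i^{(1)}$ of \eqref{holo_domain1_0<z<1} when $i\le n$) plus a piece running between $0$ and $z$, each carrying an explicit root-of-unity weight from the branch of $u(t)$; propagating this decomposition through the remaining variables $t_{n-1},\dots,t_i$ and matching the resulting chambers against \eqref{holo_domain1_0<z<1}--\eqref{nonholo_domain1_0<z<1} yields an identity $D_i^{(\infty)}\rightsquigarrow\sum_{i'=1}^{n+1}m_{i,i'}\,\tilde{D}_{i'}^{(1)}$ for some explicit matrix $M=(m_{i,i'})$ whose entries are products of roots of unity $e(\cdot)$; $M$ is invertible because parallel transport along $\gamma$ takes the basis $\{D_i^{(\infty)}\}$ of twisted homology to the basis $\{\tilde{D}_i^{(1)}\}$.

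Substituting the two continuations into \eqref{eq_connection_formula_i0} gives, for $1\le i\le n+1$,
\begin{equation*}
\sum_{i'=1}^{n+1}m_{i,i'}\,F_{\tilde{D}_{i'}^{(1)}}(z)=\sum_{j=1}^{n+1}e(1-\beta_j)\,\frac{s(\beta_i-\alpha_i)}{s(\beta_j-\alpha_i)}\prod_{1\le k\le n+1,\,k\neq j}\frac{s(\alpha_k-\beta_j)}{s(\beta_k-\beta_j)}\,F_{\tilde{D}_j^{(0)}}(z),
\end{equation*}
and since the $F_{\tilde{D}_j^{(0)}}(z)$ are linearly independent (Proposition~\ref{[M2], prop 3.1}(1)), inverting $M$ expresses each $F_{\tilde{D}_i^{(1)}}(z)$ as a combination of the $F_{\tilde{D}_j^{(0)}}(z)$; it remains to verify that $M^{-1}C'$ is the matrix of the theorem, $C'_{i,j}$ being the coefficient above. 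After cancelling the common column factor $\prod_{k\neq j}s(\alpha_k-\beta_j)/s(\beta_k-\beta_j)$ this reduces to an elementary (if tedious) identity among the $e(\cdot)$'s, the ratios $s(\beta_i-\alpha_i)/s(\beta_j-\alpha_i)$, and the new factor $s(\alpha_{n+1})/s(\beta_j-\alpha_{n+1})$; the product shape of the answer is a useful guide, since $s(\alpha_{n+1})/s(\beta_j-\alpha_{n+1})$ is precisely the $i=n+1$ instance of $s(\beta_i-\alpha_i)/s(\beta_j-\alpha_i)$ (recall $\beta_{n+1}=1$). I expect the main obstacle to be the geometric computation of $M$: keeping exact track of the root-of-unity factor picked up on each chamber as the hyperplanes $\{t_{s-1}-t_s=0\}$ and $\{t_n=z\}$ are crossed, using Propositions~\ref{holo_sols_at_1} and \ref{[M2], prop 3.1}(2) to fix the branch normalizations of the chains $\tilde{D}_i^{(1)}$, and pinning down which of the chambers \eqref{holo_domain1_0<z<1}--\eqref{nonholo_domain1_0<z<1} actually occur, i.e.\ computing the entries of $M$ explicitly.
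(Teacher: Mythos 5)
Your overall strategy coincides with the paper's: continue the $\infty\!\to\!0$ connection formula \eqref{eq_connection_formula_i0} along a path passing under the origin into $0<z<1$, identify the continued $F_{D_j^{(0)}}$ with $e(1-\beta_j)F_{\tilde{D}_j^{(0)}}$ (this part of your argument is correct and agrees with the paper's Lemma \ref{lem_analytic_continuation_of_domain}), and then re-express the continued left-hand side in the $z=1$ basis. The gap is in the key step: your claim that $\gamma^{*}\bigl(D_i^{(\infty)}\bigr)$ decomposes as $\sum_{i'}m_{i,i'}\tilde{D}_{i'}^{(1)}$ with $m_{i,i'}$ products of roots of unity is false. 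When the loaded cycle $D_i^{(\infty)}$ is dragged across the origin, the chamber decomposition produces, besides $\tilde{D}_i^{(1)}$ itself, ``mixed'' chambers such as $\{t_i<\cdots<t_{k-1}<0,\ 0<t_k<\cdots<t_n<z,\ 0<t_{i-1}<\cdots<t_1<1\}$, which belong neither to the list \eqref{holo_domain1_0<z<1}--\eqref{nonholo_domain1_0<z<1} nor to \eqref{domain0_0<z<1} (already visible in the $n=2$ picture, where the line arrangement has more chambers than the six labelled ones). These extra chambers can of course be written in the basis $\{\tilde{D}_{i'}^{(1)}\}$ with \emph{some} invertible matrix, but that matrix is essentially the unknown connection matrix itself (its entries are ratios of sines, not roots of unity), so your scheme of ``invert $M$ and multiply'' is circular: computing $M$ in your sense is equivalent to solving the connection problem.

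The missing idea is the induction on $n$. The paper keeps the extra chambers as they are and observes that, for a chamber of the above mixed type, the inner integral over $t_1,\dots,t_{k-1}$ (with $t_k$ frozen, $0<t_k<1$) is precisely the integral over a domain of type $\tilde{D}^{(1)}$ for the lower-order function ${}_kF_{k-1}(t_k)$; the induction hypothesis (the theorem for smaller $n$) then converts each such chamber into a sine-coefficient combination of chambers of type $\tilde{D}_m^{(0)}$ of ${}_{n+1}F_n(z)$. Only after this reduction does everything land in the single basis $\{\tilde{D}_j^{(0)}\}$, and the remaining work is the trigonometric resummation of the coefficients. Without this inductive device (or some substitute for handling the mixed chambers), the continuation of $D_i^{(\infty)}$ cannot be matched against the $z=1$ basis as you propose, and the final ``elementary identity among the $e(\cdot)$'s'' you anticipate does not materialize.
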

\begin{proof}
By induction. If $n=1$, the connection formulas \eqref{eq_connection_formula_10} 
and \eqref{eq_connection_formula_10_n+1}
are for the Gauss hypergeometric function. 

Suppose the identity \eqref{eq_connection_formula_10} is true for $k<n$. 
Our strategy to prove the theorem is to calculate analytic continuation 
of the connection formula \eqref{eq_connection_formula_i0} along the following path $\gamma$ on the plane of the variable $z$. 
\begin{center}
\begin{tikzpicture}
\draw [thick,dashed] (-3,0)--(3,0);
\draw [->] (-1.5,0) to [out=270, in=180] (0,-1);
\draw (0,-1) to [out=0, in=180] (1.75,0);
\fill (0, 0)node[anchor=south]{$0$} circle [radius=2pt];
\fill (2.5, 0)node[anchor=south]{$1$} circle [radius=2pt];
\fill (-1.5, 0)node[anchor=south]{$z$} circle [radius=2pt];
\end{tikzpicture}
\end{center}
First we compute analytic continuation of the integral representations 
 along the path $\gamma$. In what follows, we abbreviate $F_D(z)$ as $D$, 
 and $\{t\in T_\R \mid \cdots\}$ as  $\{\cdots\}$. 
\begin{lem}\label{lem_analytic_continuation_of_domain}
By analytic continuation along the path $\gamma$, we have 
\begin{align}
 \gamma^{*}\left(D_j^{(0)}\right)=&(-1)^{n-j+1}e_{j, n}\tilde{e}_{j+1, n+1}\tilde{D}_j^{(0)}\quad (1 \leq j \leq n+1),
 \\
\label{eq_analytic_continuation_infinity} 
\gamma^{*}\left(D_j^{(\infty)}\right)=&\tilde{D}_{j}^{(1)}
 +\sum_{k=j+1}^{n}e_{k, n} \biggl\{ \begin{array}{c} 
t_j< \cdots <t_{k-1} <0, 
\\
0<t_k< \cdots < t_n<z,\ 0<t_{j-1}< \cdots <t_1<1
\end{array} \biggr\}
\\
&+e_{j, n} \{ 0<t_j< \cdots <t_n <z,\ 0<t_j< \cdots < t_1<1\}\nonumber
\\
&+e_{j, n} \tilde{e}_{j} \{ 0<t_{j-1}<t_j< \cdots <t_n <z, \ 
0<t_{j-1}< \cdots < t_1< 1\}\quad (1 \leq j \leq n), 
\nonumber
\\
\gamma^{*}\left(D_{n+1}^{(\infty)}\right)=&\tilde{D}_{n+1}^{(1)}
+\tilde{e}_{n+1}\{ t_n<z,\ 0<t_n<\cdots <t_1<1\}, 
\label{eq_analytic_continuation_infinity_n+1}
\end{align}
where $\gamma^{*}(D)$ stands for the result of 
analytic continuation along the path 
$\gamma$ for $D$. 
\qed 
\end{lem}
\begin{proof}
By the definition of $D_j^{(0)}$, analytic continuation 
along the path 
$\gamma$ increases the arguments of $t_k$  ($j \leq k \leq n$), 
$t_{k-1}-t_k$  ($j+1 \leq k \leq n+1)$ by $\pi$. Hence, we have 
\begin{align*}
\gamma^{*}(D_j^{(0)})=e_{j, n}\tilde{e}_{j+1, n+1} \{ \overleftarrow{0< t_j< \cdots <t_n <z},\ 1<t_1< \cdots < t_{j-1}\}. 
\end{align*}
Here, $ \overleftarrow{0< t_j< \cdots <t_n <z}$ 
means that the orientation is inverse. Hence we obtain
\begin{align*}
\gamma^{*}(D_j^{(0)})&=e_{j, n}\tilde{e}_{j+1, n+1} \{ \overleftarrow{0< t_j< \cdots <t_n <z},\
 1<t_1< \cdots < t_{j-1}\} \\
&=(-1)^{n-j+1}e_{j, n}\tilde{e}_{j+1, n+1} \{ 0< t_j< \cdots <t_n <z,\ 1<t_1< \cdots < t_{j-1} \}. 
\end{align*}

Next we compute analytic continuation  of 
 $D_j^{(\infty)}$. By the definition, 
 after analytic continuation along the path 
$\gamma$, 
 the line $\{ t_n <z\}$ 
divides into  
two lines  $\{ t_n <0\}$ and $\{0<t_n<z\}$ 
on the plane of the variable $t_n$.  
Since the argument of $t_n$ increases $\pi$ on $\{0<t_n<z\}$, 
the line $\{0<t_n<z\}$ is multiplied by $e_n$. 
Similarly, $\{t_{n-1}<t_n,\ 0<t_n<z\}$ divides into two lines 
$\{t_{n-1}<0\}$ and $\{0<t_{n-1}<t_n\}$
on the plane of the variable $t_{n-1}$.  
Since the argument of $t_{n-1}$ increases $\pi$ on $\{0<t_{n-1}<t_n\}$, 
the line $\{0<t_{n-1}<t_n\}$ is multiplied by $e_{n-1}$. By repeating 
this procedure  from $t_{n-2}$ plane to $t_j$ plane, we have
\begin{align*}
\gamma^{*}(D_j^{(\infty)})=&\tilde{D}_{j}^{(1)}
+\sum_{k=j+1}^{n}e_{k, n} \biggl\{
    \begin{array}{c}
      t_j< \cdots <t_{k-1} <0,  \\
      0<t_k< \cdots < t_n<z,\ 0<t_{j-1}< \cdots <t_1<1
    \end{array}
  \biggr\} \\
&+e_{j, n} \left\{ 0<t_{j}< \cdots <t_n <z,\  
0<t_{j-1}< \cdots < t_1<1\right\}.
\end{align*}
The last domain  is actually divided as 
\begin{align*}
 \{ 0<t_j< \cdots <t_n <z,\ 0<t_{j-1}< \cdots < t_1<1 \} 
=&\tilde{e}_{j} \{ 0<t_{j-1}<t_j< \cdots <t_n <z,\ 0<t_{j-1}<t_{j-2}< \cdots < t_1<1 \} \\
&+ \{ 0<t_j< \cdots <t_n <z,\ t_j<t_{j-1}< \cdots < t_1<1 \},  
\end{align*}
because the line $\{0<t_{j-1}<t_{j-2}\}$ splits into two lines
$\{0<t_{j-1}<t_j\}$ and $\{0<t_j<t_{j-1}<t_{j-2}\}$ on the 
plane of the variable $t_{j-1}$, and 
the argument of $(t_{j-1}-t_j)$ increases $\pi$  on $\{0<t_{j-1}<t_j\}$. 
Therefore, we obtain \eqref{eq_analytic_continuation_infinity}. 
In the same way, we obtain \eqref{eq_analytic_continuation_infinity_n+1}. 
\end{proof}

We back to the proof of the theorem. 
The connection formula in Proposition \ref{[M2], prop 2.5} 
reads as 
\begin{align*}
D_j^{(\infty)}=\sum_{ k =1}^{n+1} (-1)^{n+k-j} \frac{s(\mu_j)}{s(\lambda_{j, k-1}+\mu_{j, k})} \prod_{1 \leq l \leq n+1,\atop l \neq k} \frac{s(\lambda_{l, k-1}+\mu_{l, k})}{s(\lambda_{l, k-1}+\mu_{l+1, k})}  D_k^{(0)}
\end{align*}
for the parameters $\lambda_i$, $\mu_i$ by the relations 
\eqref{eq_transformation_lambda_alpha_beta}. 
Due to Lemma \ref{lem_analytic_continuation_of_domain},  
 analytic continuation of this for $j=1,\ldots,n$
  along the path $\gamma$ is  
 equal to
\begin{align}
\label{eq_annalytic_continuation_connection_formula_1}
\tilde{D}_{j}^{(1)}
=&\sum_{ k=1}^{ n+1} (-1)^{n+k-j} \frac{s(\mu_j)}{s(\lambda_{j, k-1}+\mu_{j, k})} \prod_{1 \leq l \leq n+1,\atop l \neq k} 
\frac{s(\lambda_{l, k-1}+\mu_{l, k})}{s(\lambda_{l, k-1}+\mu_{l+1, k})} (-1)^{n-k+1}e_{k, n}\tilde{e}_{k+1, n+1}\tilde{D}_k^{(0)}\nn
\\
&-\sum_{k=j+1}^{n}e_{k, n} \{ -\infty<t_j< \cdots <t_{k-1} <0,\ 0<t_k< \cdots < t_n<z,\ 0<t_{j-1}< \cdots <t_1<1\}\nonumber
\\
&-e_{j, n} \{ 0<t_j< \cdots <t_n <z,\ 0<t_j< \cdots < t_1<1\}\\
&-e_{j, n} \tilde{e}_{j} \{ 0<t_{j-1}<t_j< \cdots <t_n <z,\ 
 0<t_{j-1}< \cdots < t_1< 1\}. \nonumber
\end{align}
In the integral associated with 
$\{ t_j< \cdots <t_{k-1} <0,\ 0<t_k< \cdots < t_n<z,\ 0<t_{j-1}< \cdots <t_1<1\}$ 
in the second term of \eqref{eq_annalytic_continuation_connection_formula_1},  
the integral with respect to the variables 
$t_1,\ldots, t_{k-1}$ corresponds to the integral defined for
$\tilde{D}_{j}^{(1)}$ of ${}_kF_{k-1}(t_k)$ because of $0<t_k<1$. 
Hence, we can apply the induction hypothesis 
to $\{ t_j< \cdots <t_{k-1} <0,\ 0<t_k< \cdots < t_n<z,\ 0<t_{j-1}< \cdots <t_1<1\}$ and obtain a sum of integrals 
associated with 
$\{ 0<t_k< \cdots < t_n<z,\ 0<t_m<\cdots<t_k,\ 1<t_1<\cdots<t_{m-1}\}$ 
($m=1,\ldots,k$), 
which is nothing but $\tilde{D}_m^{(0)}$ of ${}_{n+1}F_n(z)$. 
In the same way, we can regard the third term 
$\{ 0<t_j< \cdots <t_n <z,\ 0<t_j< \cdots < t_1<1\}$ 
in \eqref{eq_annalytic_continuation_connection_formula_1} 
as 
$\tilde{D}_j^{(1)}$ of ${}_jF_{j-1}(t_j)$. 
Hence, we can apply the induction hypothesis 
to $\{ 0<t_j< \cdots <t_n <z,\ 0<t_j< \cdots < t_1<1\}$
and obtain a sum of integrals 
associated with 
$\{ 0<t_j< \cdots < t_n<z,\ 0<t_k<\cdots<t_j,\ 1<t_1<\cdots<t_{k-1}\}$ 
($k=1,\ldots,j$), 
which is nothing but $\tilde{D}_k^{(0)}$ of ${}_{n+1}F_n(z)$. 
We can also regard the fourth term $\{ 0<t_{j-1}<t_j< \cdots <t_n <z,\ 
 0<t_{j-1}< \cdots < t_1< 1\}$  in \eqref{eq_annalytic_continuation_connection_formula_1}  as 
 $\tilde{D}_{j-1}^{(1)}$ of ${}_{j-1}F_{j-2}(t_{j-1})$. 
Hence, we can apply the induction hypothesis 
to $\{ 0<t_{j-1}<t_j< \cdots <t_n <z,\ 
 0<t_{j-1}< \cdots < t_1< 1\}$
and obtain a sum of integrals 
associated with 
$\{ 0<t_{j-1}< \cdots < t_n<z,\ 0<t_k<\cdots<t_{j-1},\ 
1<t_1<\cdots<t_{k-1}\}$ 
($k=1,\ldots,j-1$), 
which is nothing but $\tilde{D}_k^{(0)}$ of ${}_{n+1}F_n(z)$.

Since the identities \eqref{eq_connection_formula_10} and 
\eqref{eq_connection_formula_10_n+1} read as
\begin{align*}
\tilde{D}_j^{(1)}=&(-1)^{j-1} \sum_{k=1}^{n}
\frac{s(\mu_j)s(\mu_{n+1})}{s(\lambda_{j, k-1}+\mu_{j, k})
s(\lambda_{n+1, k-1}+\mu_{n+2, k})} 
\prod_{l=1, l\neq k}^{n}\frac{s(\lambda_{l, k-1}+\mu_{l, k})}
{s(\lambda_{l, k-1}+\mu_{l+1, k})}   \tilde{D}_{k}^{(0)} 
\\
&+(-1)^{j-1}\frac{s(\mu_j)}{s(\lambda_{j, n}+\mu_{j, n+1})} 
\prod_{l=1}^{n}\frac{s(\lambda_{l, n}+\mu_{l, n+1})}
{s(\lambda_{l, n}+\mu_{l+1, n+1})} \tilde{D}_{n+1}^{(0)} 
\end{align*}
for the parameters $\lambda_i$, $\mu_i$ by the relations 
\eqref{eq_transformation_lambda_alpha_beta}, 
 for $j=1,\ldots, n$ we obtain  
\begin{align}
\label{eq_connection_formula_proof_j=1-n}
\tilde{D}_{j}^{(1)} 
=&\sum_{ k=1}^{  n+1} (-1)^{-j+1} e_{k, n}\tilde{e}_{k+1, n+1} \frac{s(\mu_j)}{s(\lambda_{j, k-1}+\mu_{j, k})} \prod_{1 \leq l \leq n+1, l \neq k} \frac{s(\lambda_{l, k-1}+\mu_{l, k})}
{s(\lambda_{l, k-1}+\mu_{l+1, k})}  \tilde{D}_k^{(0)} 
\\
&-\sum_{k=j+1}^{n} e_{k, n} \left\{ (-1)^{j-1} \sum_{m=1}^{k-1}
\frac{s(\mu_j)s(\mu_{k})}{s(\lambda_{j, m-1}+\mu_{j, m})
s(\lambda_{k, m-1}+\mu_{k+1, m})} 
\prod_{l=1, l\neq m}^{k-1}\frac{s(\lambda_{l, m-1}+\mu_{l, m})}{s(\lambda_{l, m-1}+\mu_{l+1, m})}   \tilde{D}_{m}^{(0)}
\right. \nn
\\
&\left.+(-1)^{j-1}\frac{s(\mu_j)}{s(\lambda_{j, k-1}+\mu_{j, k})} 
\prod_{l=1}^{k-1}\frac{s(\lambda_{l, k-1}+\mu_{l, k})}
{s(\lambda_{l, k-1}+\mu_{l+1, k})} \tilde{D}_{k}^{(0)} 
\right\}\nn
 \\
&-e_{j, n}\sum_{k=1}^{j}(-1)^{j-1}
 \prod_{l=1, l \neq k}^{j} \frac{s(\lambda_{l, k-1}+\mu_{l, k})}{s(\lambda_{l, k-1}+\mu_{l+1, k})}\tilde{D}_{k}^{(0)}
-e_{j, n}\tilde{e}_{j}\sum_{k=1}^{j-1}(-1)^{j} \prod_{l=1, l \neq k}^{j-1} \frac{s(\lambda_{l, k-1}+\mu_{l, k})}
{s(\lambda_{l, k-1}+\mu_{l+1, k})}\tilde{D}_{k}^{(0)} \nn
\\
=&\sum_{k=1}^{j-1} \left(A_k-\sum_{m=j+1}^{n}B_{k, m}-E_k-F_k \right) \tilde{D}_{k}^{(0)}+\sum_{k=j}^{n} \left( A_k-\sum_{m=k+1}^{n+1}B_{k, m}-C_k \right) \tilde{D}_{k}^{(0)},\nn
\end{align}
where 
\begin{align*}
A_k&=(-1)^{j-1}e_{k, n}\tilde{e}_{k+1, n+1}\frac{s(\mu_j)}{s(\lambda_{j, k-1}+\mu_{j, k})}\prod_{l=1, l \neq k}^{n+1} \frac{s(\lambda_{l, k-1}+\mu_{l, k})}{s(\lambda_{l, k-1}+\mu_{l+1, k})}\quad (1\leq k\leq n+1) , \\
B_{k, m}&=e_{m, n} (-1)^{j-1} \frac{s(\mu_j)s(\mu_{m})}{s(\lambda_{j, k-1}+\mu_{j, k})
s(\lambda_{m,k-1}+\mu_{m+1,k})}
\prod_{l=1, l \neq k}^{m-1} \frac{s(\lambda_{l, k-1}+\mu_{l, k})}{s(\lambda_{l, k-1}+\mu_{l+1, k})} \quad (1\leq m\leq n,\ 1\leq k\leq m), 
 \\
C_k&=e_{k, n}(-1)^{j-1}\frac{s(\mu_j)}{s(\lambda_{j, k-1}+\mu_{j, k})}
 \prod_{l=1}^{k-1}\frac{s(\lambda_{l, k-1}+\mu_{l, k})}{s(\lambda_{l, k-1}+\mu_{l+1, k})}
\quad (j\leq k \leq n+1) 
 , \\
E_k&=e_{j, n}(-1)^{j-1}\prod_{l=1, l \neq k}^{j} \frac{s(\lambda_{l, k-1}+\mu_{l, k})}{s(\lambda_{l, k-1}+\mu_{l+1, k})}
\quad (1\leq k\leq j-1) , \\
F_k&=e_{j, n}\tilde{e}_{j}(-1)^{j} \prod_{l=1, l \neq k}^{j-1} \frac{s(\lambda_{l, k-1}+\mu_{l, k})}{s(\lambda_{l, k-1}+\mu_{l+1, k})}
\quad (1\leq k\leq j-1).
\end{align*}

First, we compute the first term in the right hand side 
of \eqref{eq_connection_formula_proof_j=1-n} as follows. 
For $k=1,\ldots, j-1$ we have 
\begin{align*}
A_k-\sum_{m=j+1}^{n}B_{k, m}-E_k-F_k=&(-1)^{j-1}
\prod_{l=1, l \neq k}^{j-1} \frac{s(\lambda_{l, k-1}+\mu_{l, k})}{s(\lambda_{l, k-1}+\mu_{l+1, k})}
\\
&\times
\left(
e_{k, n}\tilde{e}_{k+1, n+1}\frac{s(\mu_j)}{s(\lambda_{j, k-1}+\mu_{j, k})}\prod_{l=j}^{n+1} \frac{s(\lambda_{l, k-1}+\mu_{l, k})}{s(\lambda_{l, k-1}+\mu_{l+1, k})}
\right.
\\
&-\sum_{m=j+1}^ne_{m, n}  \frac{s(\mu_j)s(\mu_{m})}{s(\lambda_{j, k-1}+\mu_{j, k})
s(\lambda_{m,k-1}+\mu_{m,k})}
\prod_{l=j}^{m-1} \frac{s(\lambda_{l, k-1}+\mu_{l, k})}{s(\lambda_{l, k-1}+\mu_{l+1, k})}
\\
&\left.-e_{j, n} \frac{s(\lambda_{j, k-1}+\mu_{j, k})}
{s(\lambda_{j, k-1}+\mu_{j+1, k})}+e_{j, n}\tilde{e}_{j}
\right). 
\end{align*}
The sum of the last term and the second last term above is computed as 
\begin{align*}
&-e_{j, n} \frac{s(\lambda_{k, j-1}+\mu_{k+1, j-1})}{s(\lambda_{k, j-1}+\mu_{k+1, j})}+e_{j, n}\tilde{e}_{j} 
\\
&=\frac{1}{s(\lambda_{k, j-1}+\mu_{k+1, j})} \frac{-e_{j, n}(e_{k, j-1}\tilde{e}_{k+1, j-1}-e_{k, j-1}^{-1}\tilde{e}_{k+1, j-1}^{-1})+e_{j, n}\tilde{e}_{j}(e_{k, j-1}\tilde{e}_{k+1, j}-e_{k, j-1}^{-1}\tilde{e}_{k+1, j}^{-1})}{2\sqrt{-1}}
\\
&=e_{k, n}\tilde{e}_{k+1, j}\frac{s(\mu_j)}
{s(\lambda_{k, j-1}+\mu_{k+1, j})}.
\end{align*}

Suppose for $i\geq j$ that it holds 
\begin{align*}
&-\sum_{m=j+1}^ie_{m,n}
\frac{s(\mu_j)s(\mu_{m})}{s(\lambda_{j, k-1}+\mu_{j, k})
s(\lambda_{m,k-1}+\mu_{m+1,k})}
\prod_{l=j}^{m-1} \frac{s(\lambda_{l, k-1}+\mu_{l, k})}{s(\lambda_{l, k-1}+\mu_{l+1, k})}+e_{k, n}\tilde{e}_{k+1, j}\frac{s(\mu_j)}
{s(\lambda_{k, j-1}+\mu_{k+1, j})}
\\
&=e_{k, n}\tilde{e}_{k+1, i}\frac{s(\mu_j)}
{s(\lambda_{k,j-1}+\mu_{k+1,j-1})}\prod_{l=j}^{i}
\frac{ s(\lambda_{k, l-1}+\mu_{k+1, l-1})}
 {s(\lambda_{k, l-1}+\mu_{k+1, l})}.
\end{align*}
Then we have 
\begin{align*}
&-\sum_{m=j+1}^{i+1}e_{m,n}
\frac{s(\mu_j)s(\mu_{m})}{s(\lambda_{j, k-1}+\mu_{j, k})
s(\lambda_{m,k-1}+\mu_{m+1,k})}
\prod_{l=j}^{m-1} \frac{s(\lambda_{l, k-1}+\mu_{l, k})}{s(\lambda_{l, k-1}+\mu_{l+1, k})}+e_{k, n}\tilde{e}_{k+1, j}\frac{s(\mu_j)}
{s(\lambda_{k, j-1}+\mu_{k+1, j})}
\\
&=\frac{s(\mu_j)}{s(\lambda_{k, j-1}+\mu_{k+1, j-1})}
\frac{\prod_{l=j}^{i} s(\lambda_{k, l-1}+\mu_{k+1, l-1})}
 {\prod_{l=j}^{i+1}s(\lambda_{k, l-1}+\mu_{k+1, l})}
\\
&\times  \frac{-e_{i+1,n}\left(\tilde{e}_{i+1}-\tilde{e}_{i+1}^{-1}\right)
+e_{k, n}\tilde{e}_{k+1, i}\left(e_{k,i}\tilde{e}_{k+1,i+1}
-e_{k,i}^{-1}\tilde{e}_{k+1,i+1}^{-1}\right)
}
{2\sqrt{-1}} 
 \\
&=e_{k, n}\tilde{e}_{k+1, i+1}\frac{s(\mu_j)}
{s(\lambda_{k,j-1}+\mu_{k+1,j-1})}\prod_{l=j}^{i+1}
\frac{ s(\lambda_{k, l-1}+\mu_{k+1, l-1})}
 {s(\lambda_{k, l-1}+\mu_{k+1, l})}.
 \end{align*}
Hence we obtain
\begin{align*}
-\sum_{m=j+1}^{n}B_{k, m}-E_k-F_k 
=e_{k, n}\tilde{e}_{k+1, n}\frac{s(\mu_j)}
{s(\lambda_{k,j-1}+\mu_{k+1,j-1})}\prod_{l=j}^{n}
\frac{ s(\lambda_{k, l-1}+\mu_{k+1, l-1})}
 {s(\lambda_{k, l-1}+\mu_{k+1, l})}.
\end{align*}
Therefore, we have 
\begin{align*}
A_k-\sum_{m=j+1}^{n}B_{k, m}-E_k-F_k 
=&\frac{(-1)^{j-1}e_{k,n}\tilde{e}_{k+1,n}s(\mu_j)}
{s(\lambda_{k,j-1}+\mu_{k+1,j-1})}
 \frac{\prod_{l=1, l \neq k}^{n}s(\lambda_{l, k-1}+\mu_{l, k})}
 {\prod_{l=1, l \neq k}^{n+1}s(\lambda_{l, k-1}+\mu_{l+1, k})}
 \\
 &\times 
  \frac{-\tilde{e}_{n+1}(e_{k,n}\tilde{e}_{k+1,n}-e_{k,n}^{-1}\tilde{e}_{k+1,n}^{-1})
 +e_{k,n}\tilde{e}_{k+1,n+1}-e_{k,n}^{-1}\tilde{e}_{k+1,n+1}^{-1}}{2\sqrt{-1}}
 \\
 =&\frac{(-1)^{j-1}s(\mu_j)s(\mu_{n+1})}
{s(\lambda_{k,j-1}+\mu_{k+1,j-1})}
 \frac{\prod_{l=1, l \neq k}^{n}s(\lambda_{l, k-1}+\mu_{l, k})}
 {\prod_{l=1, l \neq k}^{n+1}s(\lambda_{l, k-1}+\mu_{l+1, k})}.
\end{align*}
Similarly, we can compute the second term in the right hand side of 
\eqref{eq_connection_formula_proof_j=1-n}. 
This completes the proof for the 
connection formula \eqref{eq_connection_formula_10}. 

The connection formula for $j=n+1$, namely, 
the connection formula \eqref{eq_connection_formula_10_n+1} 
can be proved in the same way above. By analytic continuation of the connection formula in Proposition \ref{[M2], prop 2.5} and Lemma \ref{lem_analytic_continuation_of_domain}, 
we get 
\begin{align*}
\tilde{D}_{n+1}^{(1)}=&
\sum_{ k=1}^{ n+1} (-1)^{n} \frac{s(\mu_{n+1})}{s(\lambda_{n+1, k-1}+\mu_{n+1, k})}
 \prod_{1 \leq l \leq n+1,\atop l \neq k} 
\frac{s(\lambda_{l, k-1}+\mu_{l, k})}{s(\lambda_{l, k-1}+\mu_{l+1, k})} e_{k, n}\tilde{e}_{k+1, n+1}\tilde{D}_k^{(0)}\nn
\\
&-\tilde{e}_{n+1}\{ t_n<z,\ 0<t_n<\cdots <t_1<1\}. 
\end{align*}
Then, in the integral associated with $\{ t_n<z,\ 0<t_n<\cdots <t_1<1\}$, 
the integral with respect to the variables $t_1,\ldots, t_{n-1}$ corresponds to 
the integral defined for $\tilde{D}_n^{(1)}$ of ${}_nF_{n-1}(t_n)$ because $0<t_n<1$. 
Hence we can apply the induction hypothesis to $\{ t_n<z,\ 0<t_n<\cdots <t_1<1\}$ 
and obtain a sum of integrals associated with $\{0<t_n<z,\ 0<t_k<\cdots<t_n,\ 1<t_1<\cdots<t_{k-1} \}$ 
($k=1,\ldots,n$), which is nothing but $\tilde{D}_k^{(0)}$ of ${}_{n+1}F_n(z)$. 

Therefore, we obtain 
\begin{align*}
\tilde{D}_{n+1}^{(1)}
=&\sum_{k=1}^{n} \left\{ (-1)^ne_{k, n}\tilde{e}_{k+1, n+1} \frac{s(\mu_{n+1})}{s(\lambda_{n+1, k-1}+\mu_{n+1, k})} \prod_{l=1, l \neq k}^{n+1} \frac{s(\lambda_{l, k-1}+\mu_{l, k})}
{s(\lambda_{l,k-1}+\mu_{l+1, k})} \right.
\\
&\left.+(-1)^n\tilde{e}_{n+1} \prod_{l=1, l \neq k}^{n} \frac{s(\lambda_{l, k-1}+\mu_{l, k})}
{s(\lambda_{l, k-1}+\mu_{l+1, k})} \right\}\tilde{D}_k^{(0)} \\
&+(-1)^{n-1}\frac{s(\mu_{n+1})}{s(\lambda_{n+1, n}+\mu_{n+1, n+1})} \prod_{l=1}^{n} \frac{s(\lambda_{l, n}+\mu_{l, n+1})}{s(\lambda_{l, n}+\mu_{l+1, n+1})}\tilde{D}_{n+1}^{(0)}. 
\end{align*}
Straightforward computation yields the connection formula \eqref{eq_connection_formula_10_n+1}. 
\end{proof}

\begin{rem}
The connection formulas \eqref{eq_connection_formula_10} 
and \eqref{eq_connection_formula_10_n+1} for $n=2$ 
was computed in \cite{Mimachi 3F2}. The 
connection formula \eqref{eq_connection_formula_10_n+1} 
was obtained in 
\cite{Kawabata 1},
\cite{Mimachi intersection numbers for n+1Fn}, 
\cite{Okubo Takano Yoshida}. 
\end{rem}

From Theorem \ref{thm_connection_formula_10}, we have the connection matrix 
$C^{(10)}$ given by 
\begin{equation*}
\left(\tilde{D}_1^{(1)},\ldots, \tilde{D}_{n+1}^{(1)}\right)=
\left(\tilde{D}_1^{(0)},\ldots,\tilde{D}_{n+1}^{(0)}\right)C^{(10)}.
\end{equation*}
The elements of $C^{(10)}$ are products of the sine and the cosecant:
\begin{align*}
\left(C^{(10)}\right)_{i,j}=&\frac{s(\beta_j-\alpha_j)s(\alpha_{n+1})}{s(\beta_i-\alpha_j)s(\beta_i-\alpha_{n+1})} \prod_{k=1, k \neq i}^{n+1}\frac{s(\alpha_k-\beta_i)}{s(\beta_k-\beta_i)}\quad 
(1\leq i\leq n+1,\ 1\leq j\leq n),
\\
 \left(C^{(10)}\right)_{i,n+1}=&\prod_{k=1, k \neq i}^{n+1}
 \frac{s(\alpha_k-\beta_i)}{s(\beta_k-\beta_i)}\quad 
(1\leq i\leq n+1). 
\end{align*}
We expect that there exists the inverse matrix $C^{(01)}$ of $C^{(10)}$, which implies
the set 
$\left\{\tilde{D}_1^{(1)},\ldots, \tilde{D}_{n+1}^{(1)}\right\}$ 
of the domains of integration 
gives a fundamental system of solutions at $z=1$, 
 and 
the elements of $C^{(01)}$ are products of the sine and the cosecant. 
Recall that $C^{(10)}$ are derived from analytic continuation of 
the connection formulas \eqref{eq_connection_formula_i0} in Proposition \ref{[M2], prop 2.5}, 
which relates the fundamental system of solutions at $z=\infty$ 
to that at $z=0$. 
However, we have not found a way to use the induction hypothesis 
to the analytic continuation along the path $\gamma$ of the 
 connection formula relating the fundamental system of solutions at $z=0$  
to that at $z=\infty$. Instead, we present the inverse matrix $C^{(01)}$ below 
and  give a proof by direct calculation. 

Let a square matrix $C^{(01)}$ of order $n+1$ be defined by 
\begin{align*}
\left(C^{(01)}\right)_{i,j}=&-
\frac{s(\alpha_i)s(\beta_{1, n} -\alpha_{1, n+1}-\beta_j +\alpha_i)s(\beta_j-\alpha_j)}{s(\alpha_{n+1})s(\beta_{1, n}-\alpha_{1, n+1})s(\beta_j-\alpha_i)}
\prod_{k=1, k \neq i}^{n} \frac{s(\beta_k - \alpha_i)}{s(\alpha_k-\alpha_i)}
\end{align*}
for $1\le i\le n$, $1\le j\le n+1$,  
and  
\begin{equation*}
\left(C^{(01)}\right)_{n+1,i}=-\frac{s(\beta_i-\alpha_i)}{s(\beta_{1, n}-\alpha_{1, n+1})}
\end{equation*}
for $1\le i\le n+1$. 
\begin{thm}\label{thm_connection_matrix_01}
Assume for $1\leq i,j \leq n+1$, 
$\alpha_i-\beta_j,\alpha_i-\alpha_j,\beta_i-\beta_j,\beta_{1, n} -\alpha_{1, n+1}\not\in\Z$.  
Then the matrix $C^{(01)}$ is the inverse of the connection matrix $C^{(10)}$. \qed 
\end{thm}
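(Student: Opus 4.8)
The plan is to verify $C^{(10)}C^{(01)}=I$ entrywise; since both matrices have size $n+1$, this shows $C^{(01)}$ is a two‑sided inverse. Fix $1\le i,k\le n+1$ and expand $(C^{(10)}C^{(01)})_{i,k}=\sum_{j=1}^{n+1}(C^{(10)})_{i,j}(C^{(01)})_{j,k}$. The factor $\prod_{l=1,l\ne i}^{n+1}s(\alpha_l-\beta_i)/s(\beta_l-\beta_i)$ occurring in $(C^{(10)})_{i,j}$ is independent of $j$ for $1\le j\le n$ and coincides with $(C^{(10)})_{i,n+1}$; pulling it out (it is nonzero under the hypotheses), cancelling $s(\alpha_{n+1})$, extracting the common factor $-s(\beta_k-\alpha_k)/s(\beta_{1,n}-\alpha_{1,n+1})$ that is also present in $(C^{(01)})_{n+1,k}$, and rewriting the right‑hand side with $\beta_{n+1}=1$, the desired equality $(C^{(10)}C^{(01)})_{i,k}=\delta_{i,k}$ reduces to the scalar identity
\[
\sum_{j=1}^{n}\frac{s(\alpha_j)\,s(\alpha_j+\beta_{1,n}-\alpha_{1,n+1}-\beta_k)\prod_{l=1}^{n}s(\beta_l-\alpha_j)}{s(\beta_i-\alpha_{n+1})\,s(\beta_i-\alpha_j)\,s(\beta_k-\alpha_j)\prod_{l=1,l\ne j}^{n}s(\alpha_l-\alpha_j)}+1
=\delta_{i,k}\,\frac{s(\beta_{1,n}-\alpha_{1,n+1})\prod_{l=1,l\ne i}^{n+1}s(\beta_l-\beta_i)}{\prod_{l=1}^{n+1}s(\alpha_l-\beta_i)}.
\]

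I would prove this identity by residue calculus. Introduce
\[
\Phi(x)=\frac{1}{s(\beta_i-\alpha_{n+1})}\cdot\frac{s(x)\,s(x+\beta_{1,n}-\alpha_{1,n+1}-\beta_k)\prod_{l=1}^{n}s(\beta_l-x)}{s(\beta_i-x)\,s(\beta_k-x)\prod_{l=1}^{n}s(\alpha_l-x)},
\]
which has $n+2$ sine factors in both numerator and denominator, hence satisfies $\Phi(x+1)=\Phi(x)$; under the genericity hypotheses its only poles modulo $\Z$ lie at the simple zeros $x=\alpha_j$ ($1\le j\le n$), $x=\beta_i$, $x=\beta_k$ of the denominator. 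From $\mathrm{Res}_{x=\alpha_j}1/s(\alpha_j-x)=-1/\pi$ one checks that $\mathrm{Res}_{x=\alpha_j}\Phi$ equals $-1/\pi$ times the $j$‑th summand above. Writing $\Phi$ as a rational function of $q=e(2x)$, it tends to the constants $e(\alpha_{n+1}-\beta_i)/s(\beta_i-\alpha_{n+1})$ as $\mathrm{Im}\,x\to+\infty$ and $e(\beta_i-\alpha_{n+1})/s(\beta_i-\alpha_{n+1})$ as $\mathrm{Im}\,x\to-\infty$, whose difference is exactly $2\sqrt{-1}$; hence the sum of all residues of $\Phi$ over one period equals $1/\pi$. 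Therefore the scalar identity becomes equivalent to
\[
\pi\bigl(\mathrm{Res}_{x=\beta_i}\Phi+\mathrm{Res}_{x=\beta_k}\Phi\bigr)=\delta_{i,k}\,\frac{s(\beta_{1,n}-\alpha_{1,n+1})\prod_{l=1,l\ne i}^{n+1}s(\beta_l-\beta_i)}{\prod_{l=1}^{n+1}s(\alpha_l-\beta_i)},
\]
the left side being a single term when $i=k$.

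It then remains to compute these residues by cases. If $i\ne k$, at each of $x=\beta_i,\beta_k$ the numerator of $\Phi$ supplies a zero compensating the denominator pole — from $\prod_{l=1}^{n}s(\beta_l-x)$ when the index is $\le n$, or from $s(x)$ when it is $n+1$ (as $\beta_{n+1}=1$), using $\beta_{1,n}-\alpha_{1,n+1}\notin\Z$ to exclude an accidental zero of $s(x+\beta_{1,n}-\alpha_{1,n+1}-\beta_k)$ there — so both residues vanish and both sides are $0$. If $i=k$, then $x=\beta_i$ carries a genuine simple pole (the factor $s(\beta_i-x)^2$ in the denominator against one numerator zero when $i\le n$, and $1/s(x)$ against no numerator zero when $i=n+1$), and evaluating its residue — using $s(\beta_i+\beta_{1,n}-\alpha_{1,n+1}-\beta_i)=s(\beta_{1,n}-\alpha_{1,n+1})$ and, when $i=n+1$, the reflections $s(\beta_l-1)=-s(\beta_l)$, $s(\alpha_l-1)=-s(\alpha_l)$ — reproduces exactly the right‑hand side above. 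This yields the scalar identity for all $i,k$, hence $C^{(10)}C^{(01)}=I$. I expect the main obstacle to be organizational rather than analytic: keeping track of which putative poles of $\Phi$ are removable as $i$ and $k$ range over $\{1,\dots,n\}$ versus the value $n+1$, and propagating the signs and normalizations forced by the convention $\beta_{n+1}=1$ cleanly through the reductions.
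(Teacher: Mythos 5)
Your proposal is correct, and at its core it uses the same device as the paper: convert the sine sums arising from the matrix product into sums of residues of an auxiliary meromorphic function and invoke a global residue theorem. The differences are organizational but genuine. The paper verifies the product in the order $C^{(01)}C^{(10)}$, so each entry is a sum over the index attached to the $\beta$'s; it splits into four cases $(i,j\le n$; $i\le n, j=n+1$; etc.$)$ and for each builds an explicit rational function of $x=e(2\beta_k)$ on $\mathbb{P}^1$ whose residues at $x=b_k^2$ reproduce the summands, closing the argument with ``sum of all residues $=0$'' (picking up contributions at $x=a_i^2$, $0$, $\infty$ exactly when the answer is nonzero). You instead verify $C^{(10)}C^{(01)}=I$, which for square matrices is equally sufficient; the summation index is then the one attached to the $\alpha$'s, and after factoring out the $j$-independent pieces you are left with a single two-parameter scalar identity covering all four of the paper's cases at once. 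Your $1$-periodic function $\Phi(x)$ with poles at $x\equiv\alpha_j,\beta_i,\beta_k$ is, after the substitution $q=e(2x)$ that you yourself note, the same kind of rational function the paper uses; your evaluation of the limits at $\mathrm{Im}\,x\to\pm\infty$ (difference $2\sqrt{-1}$, hence total residue $1/\pi$) plays the role of the paper's residues at $q=0,\infty$, and neatly absorbs the ``$+1$'' coming from the $j=n+1$ term. I checked the reduction to the scalar identity, the residues at $x=\alpha_j$, the asymptotic constants, and the case analysis at $x=\beta_i,\beta_k$ (including the $\beta_{n+1}=1$ conventions and the cancellation $s(x)/s(1-x)=1$); all are consistent. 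What your route buys is a uniform treatment of the diagonal and off-diagonal entries and of the index value $n+1$; what the paper's buys is four shorter, fully explicit rational-function computations with no need to track removable singularities. One cosmetic point: the hypothesis $\beta_{1,n}-\alpha_{1,n+1}\notin\Z$ is needed to define $C^{(01)}$ at all (it sits in a denominator), not to rule out an ``accidental numerator zero'' of $\Phi$ — an extra numerator zero would only help you in the $i\neq k$ case.
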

\begin{proof}
We compute the elements $\left(C^{(01)} C^{(10)}\right)_{i, j}$ of the product directly. 
We consider separately the following cases: (i) $1\leq i,j\leq n$,  
(ii) $1\leq i\leq n$, $j=n+1$, 
(iii) $i=n+1$, $1\leq j\leq n$, (iv) $i=j=n+1$.

In the case of (i), we have  
\begin{align*}
\left(C^{(01)} C^{(10)}\right)_{i, j}=&-\sum_{k=1}^{n+1}\frac{s(\alpha_i)s(\beta_{1, n}-\alpha_{1, n+1}-\beta_k+\alpha_i)s(\beta_k-\alpha_k)}{s(\alpha_{n+1})s(\beta_{1, n}-\alpha_{1, n+1})s(\beta_k-\alpha_i)}\\
& \times \frac{s(\beta_j-\alpha_j)s(\alpha_{n+1})}{s(\beta_k-\alpha_j)s(\beta_k-\alpha_{n+1})}\prod_{l=1, l \neq i}^{n} \frac{s(\beta_l-\alpha_i)}{s(\alpha_l-\alpha_i)} \prod_{l=1, l \neq k}^{n+1}\frac{s(\alpha_l-\beta_k)}{s(\beta_l-\beta_k)} \\
=&-\frac{s(\alpha_i)s(\beta_j-\alpha_j)}{s(\beta_{1, n}-\alpha_{1, n+1})}\prod_{l=1, l \neq i}^{n} \frac{s(\beta_l-\alpha_i)}{s(\alpha_l-\alpha_i)} \\
& \times \left( \sum_{k=1}^{n+1} \frac{s(\beta_{1, n}-\alpha_{1, n+1}-\beta_k+\alpha_i)s(\beta_k-\alpha_k)}{s(\beta_k-\alpha_i)s(\beta_k-\alpha_j)s(\beta_k-\alpha_{n+1})}\prod_{l=1, l \neq k}^{n+1}\frac{s(\beta_k-\alpha_l)}{s(\beta_k-\beta_l)} \right). 
\end{align*}
 Put $a_i=e^{\pi \sqrt{-1} \alpha_i}, b_i=e^{\pi \sqrt{-1} \beta_i}$. Then by definition of 
 the sine, we get 
\begin{align}
\label{eq_1_ab}
&\sum_{k=1}^{n+1}\frac{s(\beta_{1, n}-\alpha_{1, n+1}-\beta_k+\alpha_i)s(\beta_k-\alpha_k)}{s(\beta_k-\alpha_i)s(\beta_k-\alpha_j)s(\beta_k-\alpha_{n+1})}\prod_{l=1, l \neq k}^{n+1}\frac{s(\beta_k-\alpha_l)}{s(\beta_k-\beta_l)} 
\\
&=- 2\sqrt{-1}\frac{ a_j a_{n+1} }{a_1^2 \cdots a_{n+1}^2}\sum_{k=1}^{n+1}  \frac{(b_1^2 \cdots b_n^2 a_i^2 - a_1^2 \cdots a_{n+1}^2 b_k^2)}{(b_k^2-a_i^2)(b_k^2-a_j^2)(b_k^2-a_{n+1}^2)} \frac{\prod_{l=1}^{n+1} (b_k^2-a_l^2)}{\prod_{l=1, l \neq k}^{n+1} (b_k^2-b_l^2)}. \nn
\end{align}
It is sufficient to prove  that 
the right hand side above is equal to zero as a rational function of 
$a_i$ and $b_i$. 

Let the rational function $f_1(x)$ be defined by
\begin{align*}
f_1(x)=& -2\sqrt{-1} \frac{ a_i a_{n+1} }{a_1^2 \cdots a_{n+1}^2} \frac{(b_1^2 \cdots b_n^2 a_i^2 - a_1^2 \cdots a_{n+1}^2 x)}{(x-a_i^2)(x-a_j^2)(x-a_{n+1}^2)} \prod_{l=1}^{n+1} \frac{(x-a_l^2)}{ (x-b_l^2)} . 
\end{align*}
In the case of $j\neq i$,
 $f_1(x)$ has only simple poles $x=b_k^2$ ($k=1,\ldots, n+1$).
 The residues are given as  
\begin{align*}
\underset{x=b_k^2}{\rm Res}f_1(x)dx&=-2\sqrt{-1} \frac{ a_i a_{n+1} }{a_1^2 \cdots a_{n+1}^2} \frac{(b_1^2 \cdots b_n^2 a_i^2 - a_1^2 \cdots a_{n+1}^2 b_k^2)}{(b_k^2-a_i^2)(b_k^2-a_j^2)(b_k^2-a_{n+1}^2)} \frac{\prod_{l=1}^{n+1} (b_k^2-a_l^2)}{\prod_{l=1, l \neq k}^{n+1} (b_k^2-b_l^2)}.
\end{align*}
Since the sum of all residues of a rational function on $\mathbb{P}$ is equal to zero, 
the right hand side of \eqref{eq_1_ab} is zero. 

In the case of $i=j$, 
$f_1(x)$ has only simple poles $x=a_i^2$,  
 and $x=b_k^2$ ($k=1,\ldots, n+1$).
The sum of the residues at $x=b_k^2$ ($k=1,\ldots, n+1$) is 
equal to the right hand side of \eqref{eq_1_ab}.  
The residues at $x=a_i^2$ is computed as 
\begin{align*}
\underset{x=a_i^2}{\rm Res}f_1(x)dx=& -2\sqrt{-1} \frac{ a_i a_{n+1} }{a_1^2 \cdots a_{n+1}^2} \frac{(b_1^2 \cdots b_n^2 a_i^2 - a_1^2 \cdots a_{n+1}^2 a_i^2)}{(a_i^2-a_{n+1}^2)} \frac{\prod_{l=1, l \neq i}^{n+1} (a_i^2-a_l^2)}{\prod_{l=1}^{n+1} (a_i^2-b_l^2)}  \\
=& 2\sqrt{-1} \frac{ a_i^2 b_i }{a_1 \cdots a_{n+1} b_1 \cdots b_n} \frac{(b_1^2 \cdots b_n^2 - a_1^2 \cdots a_{n+1}^2)}{(b_i^2-a_i^2)(a_i^2-1)} \prod_{l=1, l \neq i}^{n} \frac{b_l (a_i^2-a_l^2)}{a_l (a_i^2-b_l^2)}  \\
=& \frac{s(\beta_{1, n}-\alpha_{1, n+1})}{s(\beta_i -\alpha_i)s(\alpha_i)}\prod_{l=1, l \neq i}^{n} \frac{s(\alpha_i -\alpha_l)}{s(\alpha_i - \beta_l)}.
\end{align*}
Hence, we have 
\begin{align*}
\text{R.H.S. of \eqref{eq_1_ab}}
=&\sum_{k=1}^{n+1} \underset{x=b_k^2}{\rm Res} f_1(x)dx \\
=& -\underset{x=a_i^2}{\rm Res}f_1(x)dx \\
=& -\frac{s(\beta_{1, n}-\alpha_{1, n+1})}{s(\beta_i -\alpha_i)s(\alpha_i)}\prod_{l=1, l \neq i}^{n} \frac{s(\alpha_i -\alpha_l)}{s(\alpha_i - \beta_l)}. 
\end{align*}
Therefore, we obtain $\left(C^{(01)} C^{(10)}\right)_{i, i}=1$  
for $1\le i, j\le n$. 

In the case of (ii), we have 
\begin{align*}
\left(C^{(01)} C^{(10)}\right)_{i, n+1}=&-\sum_{k=1}^{n+1} \frac{s(\alpha_i) s(\beta_{1, n}-\alpha_{1, n+1}-\beta_{k}+\alpha_{i})s(\beta_k-\alpha_k)}{s(\alpha_{n+1})s(\beta_{1, n}-\alpha_{1, n+1})s(\beta_k -\alpha_i)}
 \prod_{l=1, l \neq i}^{n} \frac{s(\beta_l-\alpha_i)}{s(\alpha_l-\alpha_i)}\prod_{l=1, l \neq k}^{n+1}\frac{s(\alpha_l-\beta_k)}{s(\beta_l-\beta_k)} \\
=&-\frac{s(\alpha_i)}{s(\alpha_{n+1})s(\beta_{1, n}-\alpha_{1, n+1})}\prod_{l=1, l \neq i}^{n} \frac{s(\beta_l-\alpha_i)}{s(\alpha_l-\alpha_i)} \\
&\times \sum_{k=1}^{n+1} \frac{s(\beta_{1, n}-\alpha_{1, n+1}-\beta_{k}+\alpha_{i})s(\beta_k-\alpha_k)}{s(\beta_k -\alpha_i)} \prod_{l=1, l \neq k}^{n+1}\frac{s(\beta_k-\alpha_l)}{s(\beta_k-\beta_l)}.  
\end{align*}
Rewriting the parameters $\alpha_i$, $\beta_i$ to $a_i$, $b_i$, we get 
\begin{align*}
&\sum_{k=1}^{n+1} \frac{s(\beta_{1, n}-\alpha_{1, n+1}-\beta_k +\alpha_i)s(\beta_k -\alpha_k)}{s(\beta_k -\alpha_i)} \prod_{l=1, l \neq k}^{n+1} \frac{s(\beta_k -\alpha_l)}{s(\beta_k -\beta_l)} \\
&=\sum_{k=1}^{n+1} \frac{1}{2 \sqrt{-1}} \frac{1}{a_1 \cdots a_{n+1} b_k^2} \frac{(b_1^2 \cdots b_n^2 a_i^2 - a_1^2 \cdots a_{n+1}^2 b_k^2)}{(b_k^2-a_i^2)} \frac{\prod_{l=1}^{n+1} (b_k^2-a_l^2)}{\prod_{l=1, l \neq k}^{n+1} (b_k^2 -b_l^2)}. 
\end{align*}
Let the rational function $f_2(x)$ be defined by 
\begin{align*}
f_2(x)=\frac{1}{2 \sqrt{-1}} \frac{1}{a_1 \cdots a_{n+1} x} \frac{(b_1^2 \cdots b_n^2 a_i^2 - a_1^2 \cdots a_{n+1}^2 x)}{(x-a_i^2)} \prod_{l=1}^{n+1} \frac{(x-a_l^2)}{(x -b_l^2)}.
\end{align*}
Then $f_2(x)$ has only simple poles $x=0$, $x=\infty$,   
 and $x=b_k^2$ ($k=1,\ldots, n+1$).
 The residues are given as 
 \begin{align*}
 \underset{x=0}{\rm Res}f_2(x)dx 
=& - \frac{1}{2 \sqrt{-1}},
\\
\underset{x=\infty}{\rm Res}f_2(x)dx=&\frac{1}{2 \sqrt{-1}}, 
\\
\underset{x=b_k^2}{\rm Res}f_2(x)dx=&\frac{1}{2 \sqrt{-1}} \frac{1}{a_1 \cdots a_{n+1} b_k^2} \frac{(b_1^2 \cdots b_n^2 a_i^2 - a_1^2 \cdots a_{n+1}^2 b_k^2)}{(b_k^2-a_i^2)} \frac{\prod_{l=1}^{n+1} (b_k^2-a_l^2)}{\prod_{l=1, l \neq k}^{n+1} (b_k^2 -b_l^2)}. 
\end{align*}
 Hence, we have 
 \begin{align*}
\sum_{k=1}^{n+1}\underset{x=b_k^2}{\rm Res}f_2(x)dx 
=&-\underset{x=0}{\rm Res}f_2(x)dx-\underset{x=\infty}{\rm Res}f_2(x)dx=0.
\end{align*}
 Therefore, 
 we obtain 
 $\left(C^{(01)} C^{(10)}\right)_{i, n+1}=0$ 
for $1\leq i\leq n$. 

In the case of (iii), we have 
\begin{align*}
\left(C^{(01)} C^{(10)}\right)_{n+1, j}=& -\frac{s(\beta_j -\alpha_j)s(\alpha_{n+1})}{s(\beta_{1, n}-\alpha_{1, n+1})}\left( \sum_{k=1}^{n+1} \frac{s(\beta_k- \alpha_k)}{s(\beta_k -\alpha_j)s(\beta_k -\alpha_{n+1})} \prod_{l=1, l \neq k}^{n+1} \frac{s(\beta_k-\alpha_l)}{s(\beta_k- \beta_l)} \right). 
\end{align*}
Rewriting the parameters $\alpha_i$, $\beta_i$ to $a_i$, $b_i$, we get
\begin{align*}
&\sum_{k=1}^{n+1} \frac{s(\beta_k- \alpha_k)}{s(\beta_k -\alpha_j)s(\beta_k -\alpha_{n+1})} \prod_{l=1, l \neq k}^{n+1} \frac{s(\beta_k-\alpha_l)}{s(\beta_k- \beta_l)}  \\
&= \sum_{k=1}^{n+1} 2 \sqrt{-1} \frac{b_1 \cdots b_n a_j a_{n+1}}{a_1 \cdots a_{n+1}} \frac{1}{(b_k^2-a_j^2)(b_k^2-a_{n+1}^2)} \frac{\prod_{l=1}^{n+1} (b_k^2-a_l^2)}{\prod_{l=1, l \neq k}^{n+1} (b_k^2-b_l^2)}. 
\end{align*}
Let the rational function $f_3(x)$ be defined by 
\begin{align*}
f_3(x)= 2 \sqrt{-1} \frac{b_1 \cdots b_n a_j a_{n+1}}{a_1 \cdots a_{n+1}} \frac{1}{(x-a_j^2)(x-a_{n+1}^2)} \prod_{l=1}^{n+1} \frac{ (x-a_l^2)}{ (x-b_l^2)}. 
\end{align*}
Then $f_3(x)$ has only simple poles $x=b_k^2$ ($k=1,\ldots, n+1$).
 The residues are given as 
\begin{align*}
\underset{x=b_k^2}{\rm Res}f_3(x)dx
=2 \sqrt{-1} \frac{b_1 \cdots b_n a_j a_{n+1}}{a_1 \cdots a_{n+1}} \frac{1}{(b_k^2-a_j^2)(b_k^2-a_{n+1}^2)} \frac{\prod_{l=1}^{n+1} (b_k^2-a_l^2)}{\prod_{l=1, l \neq k}^{n+1} (b_k^2-b_l^2)}. 
\end{align*}
Hence we obtain $\left(C^{(01)} C^{(10)}\right)_{n+1, j}=0$ 
for $1\leq j\leq n$.

In the case of (iv), we have 
\begin{align*}
\left(C^{(01)} C^{(10)}\right)_{n+1, n+1}=-\frac{1}{s(\beta_{1, n}-\alpha_{1, n+1})} \sum_{k=1}^{n+1} s(\beta_k -\alpha_k) \prod_{l=1, l \neq k}^{n+1} \frac{s(\beta_k -\alpha_l)}{s(\beta_k -\beta_l)}. 
\end{align*}
Rewriting the parameters $\alpha_i$, $\beta_i$ to $a_i$, $b_i$, we get 
\begin{align*}
&\sum_{k=1}^{n+1} s(\beta_k -\alpha_k) \prod_{l=1, l \neq k}^{n+1} \frac{s(\beta_k -\alpha_l)}{s(\beta_k -\beta_l)} 
= -\sum_{k=1}^{n+1} \frac{b_1 \cdots b_n}{2 \sqrt{-1} a_1 \cdots a_{n+1} b_k^2} \frac{\prod_{l=1}^{n+1} (b_k^2 -a_l^2)}{\prod_{l=1, l \neq k}^{n+1} (b_k^2 -b_l^2)}. 
\end{align*}
Let the rational function $f_4(x)$ be defined by 
\begin{align*}
f_4(x)=-\frac{b_1 \cdots b_n}{2 \sqrt{-1} a_1 \cdots a_{n+1}x} \prod_{l=1}^{n+1} \frac{x-a_l^2}{x-b_l^2}. 
\end{align*}
Then $f_4(x)$ has only simple poles $x=0$, $x=\infty$, and 
$x=b_k^2$ ($k=1,\ldots, n+1$).
 The residues are given as 
\begin{align*}
\underset{x=0}{\rm Res}f_4(x)dx 
=& - \frac{a_1 \cdots a_{n+1}}{2 \sqrt{-1} b_1 \cdots b_n},
\\
\underset{x=\infty}{\rm Res}f_4(x)dx
=& \frac{b_1 \cdots  b_n}{2 \sqrt{-1} a_1 \cdots a_{n+1}},
\\
\underset{x=b_k^2}{\rm Res}f_4(x)dx =& -\sum_{k=1}^{n+1} \frac{b_1 \cdots b_n}{2 \sqrt{-1} a_1 \cdots a_{n+1} b_k^2} \frac{\prod_{l=1}^{n+1} (b_k^2 -a_l^2)}{\prod_{l=1, l \neq k}^{n+1} (b_k^2 -b_l^2)}. 
\end{align*}
Hence, we have 
\begin{align*}
\sum_{k=1}^{n+1}\underset{x=b_k^2}{\rm Res}f_4(x)dx 
=& -\underset{x=0}{\rm Res}f_4(x)dx -\underset{x=\infty}{\rm Res}f_4(x)dx
 \\
=& -s(\beta_{1, n}-\alpha_{1, n+1}). 
\end{align*}
Therefore, we obtain $\left(C^{(01)} C^{(10)}\right)_{n+1,n+1}=1$.

\end{proof}

\begin{cor}
Assume for $1 \leq i,  j \leq n+1$
\begin{align*}
&\mathrm{Re}(\alpha_i-\beta_j+1)>0\quad (j\neq i),
\\
&\mathrm{Re}(\beta_j-\alpha_j)>0, 
\\
&\alpha_i-\alpha_j\notin \Z \quad (j\neq i),\quad 
\alpha_i-\beta_j \notin \Z, \quad \beta_i-\beta_j \notin \Z \quad  (j\neq i). 
\end{align*}
Then we have 
\begin{align*}
F_{\tilde{D}_{i}^{(0)}}(z)=&-\sum_{j=1}^{n}\frac{s(\alpha_j)s(\beta_{1, n} -\alpha_{1, n+1}-\beta_i +\alpha_j)s(\beta_i-\alpha_i)}{s(\alpha_{n+1})s(\beta_{1, n}-\alpha_{1, n+1})s(\beta_i-\alpha_j)}
\prod_{l=1, l \neq j}^{n} \frac{s(\beta_l - \alpha_j)}{s(\alpha_l-\alpha_j)}F_{\tilde{D}_{j}^{(1)}}(z)
 \\
&-\frac{s(\beta_i-\alpha_i)}{s(\beta_{1, n}-\alpha_{1, n+1})}F_{\tilde{D}_{n+1}^{(1)} }(z).
\end{align*}
Moreover, $\left\{ F_{\tilde{D}_1^{(1)}}(z),\ldots,F_{\tilde{D}_{n+1}^{(1)}}\right\}$ forms a fundamental system 
of solutions  to the generalized hypergeometric equation. \qed 
\end{cor}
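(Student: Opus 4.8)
The plan is to read the displayed formula off from Theorems~\ref{thm_connection_formula_10} and~\ref{thm_connection_matrix_01} by a short matrix manipulation, and then to deduce that the $F_{\tilde{D}_j^{(1)}}$ form a fundamental system. First I would rewrite Theorem~\ref{thm_connection_formula_10} as the row-vector identity
\begin{equation*}
\left(F_{\tilde{D}_1^{(1)}}(z),\ldots,F_{\tilde{D}_{n+1}^{(1)}}(z)\right)
=\left(F_{\tilde{D}_1^{(0)}}(z),\ldots,F_{\tilde{D}_{n+1}^{(0)}}(z)\right)C^{(10)},
\end{equation*}
valid on $0<z<1$ under the present hypotheses, with $C^{(10)}$ the constant matrix introduced after that theorem. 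By Theorem~\ref{thm_connection_matrix_01} we have $C^{(01)}C^{(10)}=I_{n+1}$, and since $C^{(10)}$ is a square matrix over $\C$ with entries independent of $z$, this one-sided inverse is two-sided, so $C^{(10)}$ is invertible with inverse $C^{(01)}$. Multiplying the identity above on the right by $C^{(01)}$ yields
\begin{equation*}
\left(F_{\tilde{D}_1^{(0)}}(z),\ldots,F_{\tilde{D}_{n+1}^{(0)}}(z)\right)
=\left(F_{\tilde{D}_1^{(1)}}(z),\ldots,F_{\tilde{D}_{n+1}^{(1)}}(z)\right)C^{(01)},
\end{equation*}
i.e.\ $F_{\tilde{D}_i^{(0)}}(z)=\sum_{j=1}^{n+1}(C^{(01)})_{j,i}\,F_{\tilde{D}_j^{(1)}}(z)$; substituting the explicit entries $(C^{(01)})_{j,i}$ ($1\le j\le n$) and $(C^{(01)})_{n+1,i}$ defined just before Theorem~\ref{thm_connection_matrix_01} reproduces the asserted formula, the only work being to rename the dummy product index.

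For the second assertion I would argue that $\{F_{\tilde{D}_i^{(0)}}(z)\}_{i=1}^{n+1}$ is a basis of the solution space of the generalized hypergeometric equation, and then transport this along $C^{(10)}$. By Proposition~\ref{[M2], prop 3.1}(1), each $F_{\tilde{D}_i^{(0)}}(z)$ equals a product of Beta values times $f_i^{(0)}(z)$, and under $\mathrm{Re}(\alpha_s-\beta_i+1)>0$ and $\mathrm{Re}(\beta_s-\alpha_s)>0$ that product is finite and nonzero, so the two families $\{F_{\tilde{D}_i^{(0)}}(z)\}$ and $\{f_i^{(0)}(z)\}$ span the same space. The characteristic exponents $1-\beta_1,\ldots,1-\beta_n,0$ at $z=0$ are pairwise non-congruent modulo $\Z$ by the hypothesis $\beta_i-\beta_j\notin\Z$ ($i\neq j$) together with $\beta_{n+1}=1$, so $f_1^{(0)}(z),\ldots,f_{n+1}^{(0)}(z)$ are linearly independent and form a basis of the $(n+1)$-dimensional solution space; hence so is $\{F_{\tilde{D}_i^{(0)}}(z)\}_{i=1}^{n+1}$. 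Since $C^{(10)}$ is invertible, the first displayed identity presents $\{F_{\tilde{D}_j^{(1)}}(z)\}_{j=1}^{n+1}$ as the image of this basis under an invertible change of coordinates; in particular each $F_{\tilde{D}_j^{(1)}}(z)$ is a $\C$-linear combination of solutions, hence a solution, and the $n+1$ of them are linearly independent, so they form a fundamental system.

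I do not expect any genuine obstacle: the substance is entirely contained in Theorems~\ref{thm_connection_formula_10} and~\ref{thm_connection_matrix_01}, and what remains is linear algebra. The two points that need care are (a) multiplying by $C^{(01)}$ on the correct side, so that the coefficient of $F_{\tilde{D}_j^{(1)}}(z)$ in $F_{\tilde{D}_i^{(0)}}(z)$ comes out as $(C^{(01)})_{j,i}$ and not $(C^{(01)})_{i,j}$; and (b) checking that the non-integrality conditions assumed here do imply those of Theorem~\ref{thm_connection_matrix_01}, so that every cosecant occurring in $C^{(10)}$ and $C^{(01)}$ is finite.
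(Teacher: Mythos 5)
Your proof is correct and is exactly the argument the paper intends (the corollary carries no separate proof and is presented as an immediate consequence of Theorems~\ref{thm_connection_formula_10} and~\ref{thm_connection_matrix_01}): right-multiply the connection identity by $C^{(01)}$, note that the left inverse of a square matrix is two-sided, read off the coefficients as $(C^{(01)})_{j,i}$, and transport the fundamental system $\{F_{\tilde{D}_i^{(0)}}\}$ through the invertible matrix. Your point (b) does surface a genuine wrinkle --- $\beta_{1,n}-\alpha_{1,n+1}\notin\Z$ is needed for $C^{(01)}$ to be defined but is not listed among the corollary's hypotheses --- yet that is an omission in the paper's statement rather than a gap in your argument.
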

 
We note that together with Proposition \ref{[M2], prop 2.1} and Theorem  
 \ref{thm_connection_formula_10}, we have the connection matrix $C^{(1\infty)}$ 
 expressing the fundamental system of solutions 
 $\left\{ F_{\tilde {D}^{(1)}_1}(z),\ldots, F_{\tilde{D}^{(1)}_1}(z)\right\}$ at $z=1$ 
 in terms of the fundamental system of solutions 
 $\left\{ F_{D^{(\infty)}_1}(z),\ldots, F_{D^{(\infty)}_1}(z)\right\}$ 
at $\infty$ as a product of the connection matrices. It is observed that the elements of 
the connection matrix $C^{(1\infty)}$ for $n\le 3$ are products of the sine and cosecant. 
However, there are elements of the inverse of $C^{(1\infty)}$ for $n\le 3$ which are not 
products of the sine and cosecant. If we want to have  
connection matrices between the fundamental system of solutions of 
singular points $z=1$ and $z=\infty$ whose elements are products of the sine and cosecant, 
then we should perform the change of variable $z=1/w$ to the connection formulas 
\eqref{eq_connection_formula_10} and \eqref{eq_connection_formula_10_n+1}. 
Due to Theorem \ref{thm_connection_matrix_01}, all elements of 
both the connection matrix and its inverse are products of the sine and cosecant. 
 
\subsection{Periodicity} 
 
In this subsection, we point out that the connection matrices 
  multiplied by diagonal matrices in the previous sections
are invariant under integer shifts of the
 characteristic exponents $\alpha_i$, $\beta_i$ of the generalized 
 hypergeometric equation. 
This is true for the connection matrix with the fundamental 
systems $X_0$ and $X_\infty$ of solutions at $z=0$ and $z=\infty$, 
and it was used for the 
construction of the monodromy-invariant general solutions of Fuchsian systems  of rank $2$ 
in \cite{Iorgov Lisovyy Teschner}  and rank $N\geq 2$  
in \cite{GIL2} with help of the $W_N$ conformal field theory with 
 central charge $c=N-1$. 
When $N=2$, $W_N$-algebra is the Virasoro algebra. 
As a result, for $N=2$ the tau function of the sixth Painlev\'e equation 
is obtained in terms of the Virasoro conformal blocks with central charge 
$c=1$ 
\cite{Iorgov Lisovyy Teschner}, and for $N\ge 2$ 
the isomonodromic 
tau function of the so-called Fuji-Suzuki-Tsuda system \cite{FS}, \cite{Suzuki}, \cite{Tsuda1} is obtained 
in terms of 
semi-degenerate conformal blocks of $W_N$-algebra
 with central charge $c=N-1$, which are equivalent to the Nekrasov partition functions \cite{Nekrasov} 
by AGT correspondence \cite{AGT}, \cite{Wyllard}. The series representation of the tau function of 
the Fuji-Suzuki-Tsuda system is also derived by expanding the Fredholm determinant \cite{GIL3}. 
 The periodicity of the connection matrix 
 of $q$-hypergeometric series was used in \cite{Jimbo Nagoya Sakai} in order to obtain 
 a fundamental solution to 
 the connection-preserving deformation of the $2$ by $2$ $q$-difference linear system 
 associated with the $q$-difference Painlev\'e VI equation.

 We take 
 \begin{align*}
D_i^{(0)}=&\{t\in T_\R \mid z<t_n< \cdots <t_i <0,\ 1<t_1< \cdots < t_{i-1} \} \quad (1 \leq i \leq  n+1),
\\
D_i^{(\infty)}=&\{t\in T_\R \mid t_i< \cdots <t_n <z,\ 0<t_{i-1}< \cdots < t_1< 1 \} \quad (1 \leq i \leq  n+1).
\end{align*}
 as domains of integration giving the fundamental systems of solutions at $z=0$ and $z=\infty$, and 
  \begin{align*}
\tilde{D}_i^{(1)}&=\{t\in T_\R | t_i< \cdots <t_n <0,\ 0<t_{i-1}< \cdots < t_1< 1 \} \quad (1 \leq i \leq  n), 
 \\
\tilde{D}_{n+1}^{(1)}&=\{t\in T_\R | z<t_n< \cdots < t_1< 1 \}
\end{align*}
 as domains of integration giving 
 the fundamental system of solutions at $z=1$.

From Proposition \ref{[M2], prop 2.5}, we have the connection 
matrix $C^{(\infty 0)}$ given by
\begin{equation*}
\left(D_1^{(\infty)},\ldots, D_{n+1}^{(\infty)}\right)=
\left(D_1^{(0)},\ldots,D_{n+1}^{(0)}\right)C^{(\infty 0)}.
\end{equation*} 
The elements of $C^{(\infty 0)}$ read as
\begin{align*}
\left( C^{(\infty 0)}\right)_{i,j}=
\frac{s(\beta_j-\alpha_j)}{s(\beta_i-\alpha_j)}
\prod_{k=1, k \neq i}^{n+1}
 \frac{s(\alpha_k-\beta_i)}{s(\beta_k-\beta_i)} 
\end{align*} 
 for $1\leq i,j\leq n+1$. 
 
Let the diagonal matrices $N_0$, $N_1$, $N_\infty$ 
of order $n+1$ be defined by  
 \begin{align*}
 N_0=&{\rm diag} (e(\alpha_1),\ldots,e(\alpha_{n+1})),
 \\
 N_1=&e(\beta_{1,n}-\alpha_{1,n+1})
 {\rm diag}(e(\beta_1),\ldots,e(\beta_{n+1})),
 \\
 N_\infty=&N_1. 
 \end{align*}
\begin{prop} 
The connection matrices associated with 
 the fundamental systems of solutions
 \begin{align*}
 \left(D_1^{(0)},\ldots,D_{n+1}^{(0)}\right)N_0,
 \quad
 \left(\tilde{D}_1^{(1)},\ldots, \tilde{D}_{n+1}^{(1)}\right)N_1,
 \quad 
 \left(D_1^{(\infty)},\ldots, D_{n+1}^{(\infty)}\right)N_\infty
 \end{align*}
 are invariant under integer shifts of the
 characteristic exponents of the generalized hypergeometric equation. \qed  
\end{prop}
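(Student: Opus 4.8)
The plan rests on one structural observation. In Proposition~\ref{[M2], prop 2.5}, Theorem~\ref{thm_connection_formula_10} and Theorem~\ref{thm_connection_matrix_01} every connection coefficient, and likewise every diagonal entry of $N_0$ and $N_1=N_\infty$, is a monomial in the functions $s(\cdot)=\sin(\pi\,\cdot)$ and $e(\cdot)=\exp(\pi\sqrt{-1}\,\cdot)$ whose arguments are $\Z$-linear in the exponents $\alpha_i,\beta_i$ (through \eqref{eq_transformation_lambda_alpha_beta} this also covers the quantities $\lambda_{i,j},\mu_{i,j}$ entering the factor $\Lambda$ below); no value of the Gamma function ever appears. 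Writing the integer shift as $\alpha_i\mapsto\alpha_i+a_i$ $(1\le i\le n+1)$ and $\beta_i\mapsto\beta_i+b_i$ $(1\le i\le n)$, with the convention $\beta_{n+1}=1$ kept so that $b_{n+1}=0$, the relations $s(A+m)=(-1)^ms(A)$ and $e(A+m)=(-1)^me(A)$ for $m\in\Z$ show that under the shift every such quantity is merely multiplied by a sign. Thus the whole proof consists in computing, for each matrix entry, the exponent of $-1$ modulo~$2$ and checking that it vanishes.

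First I would reduce to two of the three connection matrices. Among the systems $\bigl(D_\bullet^{(0)}\bigr)N_0$, $\bigl(\tilde D_\bullet^{(1)}\bigr)N_1$, $\bigl(D_\bullet^{(\infty)}\bigr)N_\infty$, the third change-of-basis matrix is a product of one of the other two with the inverse of the other, and products and inverses of shift-invariant matrices are shift-invariant; so it suffices to treat (a) the matrix $C^{(\infty0)}$ of Proposition~\ref{[M2], prop 2.5} between $D_\bullet^{(\infty)}$ and $D_\bullet^{(0)}$, whose renormalized form is $N_0^{-1}C^{(\infty0)}N_\infty$, and (b) the matrix between $\tilde D_\bullet^{(1)}$ and $D_\bullet^{(0)}$. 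For (b) I must keep in mind that $D_i^{(0)}$ is placed at $z<0$ while $\tilde D_i^{(1)}$ is at $0<z<1$, so the relevant matrix comes from analytic continuation along the path $\gamma$ of the proof of Theorem~\ref{thm_connection_formula_10}: by Lemma~\ref{lem_analytic_continuation_of_domain}, $\gamma^{*}(D_j^{(0)})=\Lambda_{jj}\,\tilde D_j^{(0)}$ with $\Lambda_{jj}=(-1)^{n-j+1}e_{j,n}\tilde e_{j+1,n+1}$, so that matrix is $\Lambda^{-1}C^{(10)}$ and its renormalized form is $N_0^{-1}\Lambda^{-1}C^{(10)}N_1$.

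For (a) I would use $(C^{(\infty0)})_{i,j}=\frac{s(\beta_j-\alpha_j)}{s(\beta_i-\alpha_j)}\prod_{k\ne i}\frac{s(\alpha_k-\beta_i)}{s(\beta_k-\beta_i)}$ and compute that the shift multiplies it by $(-1)$ to the power $b_j+\sum_{k=1}^{n+1}a_k-a_i-\sum_{k=1}^{n}b_k$ modulo~$2$: the contributions $-nb_i$ of the $n$ numerator factors $s(\alpha_k-\beta_i)$ and of the $n$ denominator factors $s(\beta_k-\beta_i)$ cancel, and $b_{n+1}=0$ is used to evaluate the $\beta$-sums. Since $(N_0^{-1})_{ii}=e(-\alpha_i)$ contributes $a_i$ and $(N_\infty)_{jj}=e(\beta_{1,n}-\alpha_{1,n+1})e(\beta_j)$ contributes $\sum_{k=1}^{n}b_k-\sum_{k=1}^{n+1}a_k+b_j$, the total exponent is $2b_j\equiv0$, so $N_0^{-1}C^{(\infty0)}N_\infty$ is unchanged. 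For (b) the single new ingredient is $\Lambda^{-1}$: from \eqref{eq_transformation_lambda_alpha_beta} the shift moves $\lambda_{i,n}$ and $\mu_{i+1,n+1}$ by integers whose sum is $-b_i$, so $\Lambda^{-1}_{ii}$ acquires the sign $(-1)^{b_i}$; and $C^{(10)}$ differs from $C^{(\infty0)}$ only by the extra factor $\frac{s(\alpha_{n+1})}{s(\beta_i-\alpha_{n+1})}$ when $1\le j\le n$, resp.\ its reciprocal when $j=n+1$, each of which transforms with exponent $\equiv b_i$. These two signs $(-1)^{b_i}$ cancel, and the remaining terms combine as in (a) to $2(b_i+b_j)\equiv0$ for $j\le n$ and to $2b_i\equiv0$ for $j=n+1$ (where $e(\beta_{n+1})=-1$ is shift-inert). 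Hence $N_0^{-1}\Lambda^{-1}C^{(10)}N_1$ is unchanged, and the third connection matrix, being $(N_0^{-1}C^{(\infty0)}N_\infty)^{-1}(N_0^{-1}\Lambda^{-1}C^{(10)}N_1)$, is unchanged too.

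I expect the main obstacle to be exactly this sign bookkeeping done carefully: separating the holomorphic rows and columns from the single non-holomorphic one (controlled by $\beta_{n+1}=1$ and by the scalar prefactor $e(\beta_{1,n}-\alpha_{1,n+1})$ sitting inside $N_1=N_\infty$), and, for (b), not dropping the diagonal factor $\Lambda$ that enters because the $z=0$ and $z=1$ domains are placed at different base points. Without $\Lambda$ the matrix $N_0^{-1}C^{(10)}N_1$ would carry a spurious sign $(-1)^{b_i}$ in its $i$-th row, and it is precisely $\Lambda^{-1}$ — forced on us by Lemma~\ref{lem_analytic_continuation_of_domain} — that restores invariance; pinning down this interplay, together with the clean vanishing of the two residual exponents $2(b_i+b_j)$ and $2b_i$, is the real content of the argument.
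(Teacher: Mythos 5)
Your proposal is correct and follows essentially the same route as the paper: reduce to the renormalized matrices $\widehat{C}^{(\infty 0)}=N_0^{-1}C^{(\infty 0)}N_\infty$ and $\widehat{C}^{(10)}=N_0^{-1}\,\mathrm{diag}\bigl(-e(\beta_1),\ldots,-e(\beta_{n+1})\bigr)C^{(10)}N_1$ (your $\Lambda^{-1}$ from Lemma \ref{lem_analytic_continuation_of_domain} is exactly this diagonal factor), and then check sign invariance under unit shifts using $s(A+m)=(-1)^m s(A)$, $e(A+m)=(-1)^m e(A)$. The only difference is that you carry out explicitly the sign bookkeeping that the paper dismisses as ``immediate to check,'' and your exponents ($2b_j$, $2(b_i+b_j)$, $2b_i$) are all correct.
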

 \begin{proof}
 It is sufficient to consider the shifts $\alpha_i\mapsto \alpha_i\pm 1$ and   
 $\beta_i\mapsto \beta_i\pm 1$ for $1\leq i\leq n+1$. 
 The connection matrices which express the fundamental system of 
 solutions at  $z=1$, $z=\infty$ in terms of the fundamental system of 
 solutions at  $z=\infty$ is 
\begin{align*}
\widehat{C}^{(10)}=N_0^{-1} {\rm diag}(-e(\beta_1),\ldots,-e(\beta_{n+1}))C^{(10)}N_1, 
\quad 
\widehat{C}^{(\infty 0)}=N_0^{-1}C^{(\infty 0)}N_\infty, 
\end{align*} 
 respectively, 
 because of Lemma 
 \ref{lem_analytic_continuation_of_domain} and the relations of the 
 parameters \eqref{eq_transformation_lambda_alpha_beta}.  
 Since the matrices $C^{(10)}$, $C^{(\infty 0)}$ are 
 explicitly written, it is immediate to check that 
 the shift $\alpha_i\mapsto \alpha_i\pm 1$ or   
 $\beta_i\mapsto \beta_i\pm 1$ preserves  
the connection matrices $\widehat{C}^{(1 0)}$, 
$\widehat{C}^{(\infty 0)}$. 
 \end{proof}

 \textbf{Acknowledgments.}
This work is partially supported by JSPS KAKENHI Grant Number JP18K03326.

\end{document}